\newcommand{\C}{\mathbb{C}}
\newcommand{\Q}{\mathbb{Q}}
\newcommand{\F}{\mathbb{F}}
\DeclareMathOperator{\Var}{Var}
\DeclareMathOperator{\Exp}{E}
\newtheorem{thm}{Theorem}[section]
\newtheorem{lem}[thm]{Lemma}
\theoremstyle{definition}
\newtheorem{ex}{Example}[section]
\begin{document}
\title[Variance of cubic curves over $\F_p$ and Jacobsthal sums]{Variance of point-counts for families of cubic curves over $\F_p$ and Jacobsthal sums}
\keywords{Number of points on cubic curves, variance, Jacobsthal sum.}
\subjclass[2020]{Primary: 11G20. Secondary: 11L10.}

\date{\today}
\author{Bogdan Nica}

\begin{abstract}  
We give explicit computations for the variance of the number of points along one-parameter families of cubic curves. We highlight evaluations of variances that involve Jacobsthal sums.
\end{abstract}

\address{\newline Department of Mathematical Sciences \newline Indiana University Indianapolis}
\email{bnica@iu.edu}

\maketitle
\setcounter{tocdepth}{1}
\tableofcontents

\section{Introduction}
Let $\F_p$ be the finite field with $p$ elements, where $p$ is a prime. Throughout, we assume that $p>3$.  

A general problem of great interest is that of counting the number of solutions in $\F_p$ to an equation of the form $y^2=f(x)$, where $f(x)$ is a polynomial map on $\F_p$; in a more geometric phraseology, the goal is to count the number of $\F_p$-points on the curve $y^2=f(x)$. For most polynomials, there is no explicit formula for the solution count. One way to circumvent this issue is to look for statistical insight. In this paper, we are interested in \emph{the variance of the number of points along one-parameter families of curves}.

Consider a family of curves $\mathcal{C}_\lambda: y^2=f_\lambda(x)$ parameterized by $\lambda\in \F_p$. Herein we exclusively consider families of cubic curves, in the sense that each $f_\lambda$ is a polynomial of degree $3$, but similar computations can be undoubtedly carried out for certain families of higher degree. Our main goal is to explicitly compute the variance of the point-count $\#\mathcal{C}=(\#\mathcal{C}_\lambda)_{\lambda\in \F_p}$, where
\[\#\mathcal{C}_\lambda=\#\big\{(x,y)\in \F_p\times \F_p: y^2=f_\lambda(x)\big\}\]
is the number of points on $\mathcal{C}_\lambda$.  

Let us recall that the \emph{variance} of a discrete random variable $X=(X_\lambda)_{\lambda\in \F_p}$ is defined as
\begin{align}\label{eq: var1}
\Var X=\Exp\Big( \big(X-\Exp(X)\big)^2\Big)
\end{align}
where the expected value, or the mean, is given by $\Exp(X)=\frac{1}{p}\sum_{\lambda\in \F_p} X_\lambda$.
Formula \eqref{eq: var1} captures the idea that the variance measures the deviation from the mean. A key property of the  variance, flowing out of formula \eqref{eq: var1}, is its translation invariance: $\Var (c+X)=\Var X$, where $c+X=(c+X_\lambda)_{\lambda\in \F_p}$. 

An alternate formula, oftentimes more convenient for computations, is  
\begin{align}\label{eq: var2}
\Var X=\Exp(X^2)-\Exp(X)^2.
\end{align}
The meaning of this second formula is that a variance computation usually combines two separate computations of expected values, $\Exp(X)$ and $\Exp(X^2)$. We note that, up to a factor of $p$, the two expected values $\Exp(X)$ and $\Exp(X^2)$ can be thought of as the first and the second moments of $X=(X_\lambda)_{\lambda\in \F_p}$, namely $\sum_{\lambda\in \F_p} X_\lambda$ and $\sum_{\lambda\in \F_p} X_\lambda^2$.

In what concerns our problem, that of explicitly computing the variance for the number of points along families of curves, the first step is to turn to quadratic character sums. Consider, once again, a family of curves $\mathcal{C}_\lambda: y^2=f_\lambda(x)$. For each $x\in \F_p$ the equation $y^2=f_\lambda(x)$ has $1+\sigma(f_\lambda(x))$ solutions, where $\sigma$ denotes the quadratic character on $\F_p$. Thus 
\begin{align*}
\#\mathcal{C}_\lambda=p+S_\lambda
\end{align*}
where
\begin{align}
S_\lambda=\sum_{x\in \F_p} \sigma\big(f_\lambda(x)\big).
\end{align}
It follows, by translation invariance, that
\begin{align}
\Var \#\mathcal{C}=\Var S
 \end{align}
 where $S=(S_\lambda)_{\lambda\in \F_p}$. This reformulation of the variance problem is more amenable to computations due to the various algebraic properties of the quadratic character $\sigma$--chief among them being its multiplicativity. The upshot, then, is the following: although our results are stated in terms of point-counts, our arguments are really about the variance of the quadratic character sums $(S_\lambda)_{\lambda\in \F_p}$.
 
 As a quick illustration of this idea, let us observe the following twisting invariance: for $d\in \F_p^*$, the family of curves $\mathcal{C}'_\lambda: dy^2=f_\lambda(x)$ has $\Var \#\mathcal{C}'=\Var \#\mathcal{C}$. Indeed, the twisted quadratic character sums satisfy the uniform signing rule $S'_\lambda=\sigma(d)S_\lambda$ for each $\lambda\in \F_p$. It follows that $\Exp(S')^2=\Exp(S)^2$ and $\Exp(S'^2)=\Exp(S^2)$, whence $\Var S'=\Var S$.

Evaluations for the first and second moments of the quadratic character sums $(S_\lambda)_{\lambda\in \F_p}$ associated to various families of cubics have been considered before. Without attempting to be exhaustive, we mention the following families of cubics:
 \begin{itemize}
 \item[(i)] the families $\{y^2=x^3+\lambda x: \lambda \in \F_p\}$ and $\{y^2=x^3+\lambda : \lambda \in \F_p\}$ (Jacobsthal \cite{J0, J});
 \item[(ii)]  the two-parameter Weierstrass family $\{y^2=x^3+\lambda x+\lambda' : \lambda ,\lambda' \in \F_p\}$ (Birch \cite{B});
\item[(iii)]  the one-parameter Weierstrass families $\{y^2=x^3+\lambda x+c: \lambda \in \F_p\}$, for fixed $c\in\F_p^*$, respectively $\{y^2=x^3+bx+\lambda : \lambda \in \F_p\}$ for fixed $b\in \F_p^*$ (He--McLaughlin \cite{HM}); 
 \item[(iv)] the Legendre family $\{y^2=x(x-1)(x-\lambda): \lambda\in \F_p\}$ (Hopf \cite{H}, Yamauchi \cite{Y});
  \item[(v)] the families $\{y^2=x^3+\lambda  x^2+1: \lambda \in \F_p\}$, $\{y^2=x^3-\lambda^2(x-1): \lambda \in \F_p\}$, and $\{y^2=x^3-3x+1+\lambda (x^2-x): \lambda \in \F_p\}$ (Miller \cite{Mil1, Mil2}).
 \end{itemize}
 
Further first and second moments have been worked out by Steven J. Miller and his collaborators. Miller's original motivation is a certain negative bias phenomenon which impacts the rank of an elliptic curve over $\Q$, cf. \cite{Mil1}.
 
Somewhat abusively, such families of cubics are often referred to as families of elliptic curves. The generic cubic in each family is indeed separable whence elliptic, but there may be exceptional non-elliptic cubics in each family. By restricting the parameter range, one can impose that the families of cubics consist of elliptic curves only. Thus, up to minor adjustments, it is possible to deduce explicit computations of variances over families of genuine elliptic curves. As far as we can tell, the adjusted results offer more wrinkles than wisdom; we chose to not pursue them in detail. 

Our results are inspired by the evaluations of first and second moments mentioned above. Herein we do three things. Firstly, on the technical side, we generalize some of them. Secondly, on the conceptual side, we introduce and pursue the variance viewpoint\footnote{Other works have considered how point-counts vary over families of cubic curves, cf. the titles of \cite{B} and \cite{Mil2}. But the variance itself has not been considered before, to the best of our knowledge.}. This way of framing first- and second-moment calculations has an attractive statistical interpretation, and several stability features. Thirdly, we highlight variance evaluations that end up involving certain quadratic character sums known as \emph{Jacobsthal sums}. By using known evaluations of Jacobsthal sums we achieve \emph{interesting} explicit evaluations of variances for several families of cubics. The usefulness of Jacobsthal sums in evaluating second moments--or variances, in this paper's perspective--has been hitherto untapped.

\medskip
\section{Results}
We now state our main results, in which we obtain explicit computations for the variance of the number of points along certain one-parameter families of cubic curves. The first three results concern the general cubic $y^2=x^3+ax^2+bx+c$, viewed as a one-parameter family by fixing two of the coefficients $a,b,c$ and letting the third vary over $\F_p$.

\begin{thm}\label{thm: C}
Fix $a,b\in \F_p$ and consider the family of curves $\mathcal{C}_\lambda: y^2=x^3+a x^2+bx+\lambda$ parameterized by $\lambda\in \F_p$. Then
\begin{align*}
\Var \#\mathcal{C}=\begin{cases}
p-1-\sigma(-3)-\sigma(a^2-3b) & \textrm{ if } a^2\neq 3b,\\
\big(1+\sigma(-3)\big)(p-1) & \textrm{ if } a^2=3b.
\end{cases}
\end{align*}
\end{thm}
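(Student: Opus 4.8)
The plan is to work with the character-sum reformulation $\Var\#\mathcal{C} = \Var S$ from the introduction, where $S_\lambda = \sum_{x}\sigma\big(f(x)+\lambda\big)$ with $f(x) = x^3+ax^2+bx$, and to compute $\Var S = \Exp(S^2) - \Exp(S)^2$ through \eqref{eq: var2}. The first moment is immediate: interchanging the order of summation in $\sum_\lambda S_\lambda = \sum_{x}\sum_{\lambda}\sigma(f(x)+\lambda)$ and observing that $f(x)+\lambda$ ranges over all of $\F_p$ as $\lambda$ does, each inner sum equals $\sum_{u\in\F_p}\sigma(u)=0$. Hence $\Exp(S)=0$ and $\Var S = \Exp(S^2) = \tfrac{1}{p}\sum_\lambda S_\lambda^2$, so the entire task is the second moment.

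The heart of the argument is that second moment. Expanding the square and interchanging summations gives $\sum_\lambda S_\lambda^2 = \sum_{x_1,x_2}\sum_\lambda \sigma\big((f(x_1)+\lambda)(f(x_2)+\lambda)\big)$, so the inner sum is a quadratic character sum in $\lambda$ with leading coefficient $1$ and discriminant $(f(x_1)-f(x_2))^2$. By the standard evaluation of such sums, it equals $p-1$ when $f(x_1)=f(x_2)$ and $-1$ otherwise. I would therefore rewrite it as $-1 + p\cdot\ct[f(x_1)=f(x_2)]$, collapsing the double sum to $\sum_\lambda S_\lambda^2 = -p^2 + pN$, where $N = \#\{(x_1,x_2)\in\F_p^2 : f(x_1)=f(x_2)\}$. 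Consequently $\Var S = N - p$, and the problem is reduced to a point count.

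To count $N$, I would factor $f(x_1)-f(x_2) = (x_1-x_2)\,Q(x_1,x_2)$ with $Q = x_1^2+x_1x_2+x_2^2+a(x_1+x_2)+b$ and apply inclusion--exclusion to $\{x_1=x_2\}\cup\{Q=0\}$. The diagonal contributes $p$; the overlap $\{x_1=x_2,\ Q=0\}$ is the zero set of $3x^2+2ax+b$, of size $1+\sigma(a^2-3b)$ via its discriminant $4(a^2-3b)$. For $\#\{Q=0\}$ I would pass to coordinates $s=x_1+x_2$, $d=x_1-x_2$ (a bijection since $p>3$), under which $Q=0$ becomes $d^2 = -3s^2-4as-4b =: R(s)$; summing the solution count $1+\sigma(R(s))$ over $s$ yields $\#\{Q=0\} = p + \sum_s\sigma(R(s))$. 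Collecting terms gives $\Var S = N-p = p - 1 - \sigma(a^2-3b) + \sum_s\sigma\big(-3s^2-4as-4b\big)$.

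The final step---and the one requiring the most care---is the residual character sum $\sum_s\sigma(-3s^2-4as-4b)$, whose leading coefficient is $-3$ and whose discriminant is $16(a^2-3b)$. The dichotomy in the theorem is exactly the dichotomy of this sum: when $a^2\neq 3b$ the discriminant is nonzero and the sum equals $-\sigma(-3)$, giving $\Var S = p-1-\sigma(-3)-\sigma(a^2-3b)$; when $a^2=3b$ the quadratic is, up to the unit $-3$, a perfect square, the sum equals $(p-1)\sigma(-3)$, and since $\sigma(a^2-3b)=\sigma(0)=0$ we obtain $\Var S = (p-1)(1+\sigma(-3))$. The main obstacle is thus not any single step but the disciplined bookkeeping across the two nested quadratic character sums and the coincident degeneration at $a^2=3b$, where both the discriminant of $R$ and the term $\sigma(a^2-3b)$ vanish at once.
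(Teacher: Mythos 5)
Your argument is correct and follows essentially the same route as the paper: first moment zero by orthogonality in $\lambda$, second moment reduced via $\sum_\lambda\sigma(\lambda+c)\sigma(\lambda+c')=p\llbracket c=c'\rrbracket-1$ to counting pairs with $f(x_1)=f(x_2)$, then factoring out $x_1-x_2$ and counting points on the conic $Q=0$. The only (cosmetic) difference is that you diagonalize $Q$ via $s=x_1+x_2$, $d=x_1-x_2$ before evaluating the resulting complete quadratic character sum, whereas the paper solves $Q=0$ as a quadratic in one variable and completes the square afterwards; both reduce to the same evaluation with the same degeneration at $a^2=3b$.
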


\begin{thm} \label{thm: B}
Fix $a,c\in \F_p$ and consider the family of curves $\mathcal{C}_\lambda: y^2=x^3+a x^2+\lambda x+c$ parameterized by $\lambda\in \F_p$.
\begin{itemize}
\item[(i)] If $c=0$ then
\begin{align*}
\Var \#\mathcal{C}&=\begin{cases}
\big(1+\sigma(-1)\big)(p-1) & \textrm{ if } a=0,\\
p-2-\sigma(-1) & \textrm{ if } a\neq 0.
\end{cases}
\end{align*}
\item[(ii)] If $c\neq 0$ then
\begin{align*}
\Var \#\mathcal{C} =p-1-\sigma(-1)-n_3+\sigma(c)\sum_{x\in \F_p} \sigma(x^3+ax^2-4c)
\end{align*}
where $n_3$ is the number of solutions in $\F_p$ to the equation $x^3+ax^2-4c=0$.
\end{itemize}
\end{thm}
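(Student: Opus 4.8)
The plan is to pass to the character sum $S=(S_\lambda)_{\lambda\in\F_p}$ with $S_\lambda=\sum_{x\in\F_p}\sigma(x^3+ax^2+\lambda x+c)$ and compute $\Var\#\mathcal{C}=\Var S=\Exp(S^2)-\Exp(S)^2$ through its two moments. For $\Exp(S)$ I would interchange the order of summation in $\sum_\lambda S_\lambda=\sum_x\sum_\lambda\sigma(x^3+ax^2+\lambda x+c)$; for fixed $x$ the argument is affine in $\lambda$, so when $x\neq0$ it runs over all of $\F_p$ and contributes $\sum_{t}\sigma(t)=0$, while $x=0$ leaves the constant $\sigma(c)$. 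Hence $\Exp(S)=\sigma(c)$, giving $\Exp(S)^2=1$ when $c\neq0$ and $\Exp(S)^2=0$ when $c=0$.

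The substance is the second moment. Expanding the square and interchanging sums gives $\sum_\lambda S_\lambda^2=\sum_{x_1,x_2}\sum_\lambda\sigma\big(g_1(\lambda)g_2(\lambda)\big)$, where $g_i(\lambda)=x_i\lambda+(x_i^3+ax_i^2+c)$ is affine in $\lambda$ with leading coefficient $x_i$. When $x_1,x_2\neq0$ the inner sum is a genuine quadratic character sum in $\lambda$, evaluated by the standard formula $\sum_\lambda\sigma(A\lambda^2+B\lambda+C)=-\sigma(A)$ when $B^2-4AC\neq0$ and $(p-1)\sigma(A)$ when $B^2-4AC=0$, here with $A=x_1x_2$. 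The discriminant vanishes exactly when $g_1,g_2$ share their root $\lambda_i=-(x_i^2+ax_i+c/x_i)$, and a short manipulation shows $\lambda_1=\lambda_2$ is equivalent to $(x_1-x_2)\big(x_1x_2(x_1+x_2+a)-c\big)=0$. Writing the inner sum uniformly as $-\sigma(x_1x_2)+p\,\sigma(x_1x_2)\,\ct[\lambda_1=\lambda_2]$, the cross term sums to $-\big(\sum_{x\neq0}\sigma(x)\big)^2=0$; the diagonal $x_1=x_2\neq0$ together with the degenerate term $x_1=x_2=0$ (which contributes $\sigma(c^2)=1$ for every $\lambda$) combine into the leading $p$, while the mixed terms with exactly one $x_i=0$ vanish. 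For $c\neq0$ this leaves $\Exp(S^2)=p+\Sigma$, hence $\Var\#\mathcal{C}=p-1+\Sigma$, where $\Sigma=\sum\sigma(x_1x_2)$ runs over ordered pairs $x_1\neq x_2$ (automatically nonzero) satisfying $x_1x_2(x_1+x_2+a)=c$.

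It remains to identify $\Sigma$ with the expression in (ii). The decisive step is to use the defining relation to rewrite $\sigma(x_1x_2)=\sigma(c)\,\sigma(x_1+x_2+a)$, then set $w=x_1+x_2+a$, so that $x_1+x_2=w-a$ and $x_1x_2=c/w$ force $w\neq0$; by Vieta the number of ordered distinct pairs with this sum and product is $1+\sigma(\Delta_w)-\ct[\Delta_w=0]$, where $\Delta_w=(w-a)^2-4c/w$. Thus $\Sigma=\sigma(c)\sum_{w\neq0}\sigma(w)\big(1+\sigma(\Delta_w)-\ct[\Delta_w=0]\big)$. The constant part sums to $0$; in the main part $\sigma(w)\sigma(\Delta_w)=\sigma\big(w(w-a)^2-4c\big)$, and the affine shift $w=x+a$ sends $w(w-a)^2-4c$ to exactly $x^3+ax^2-4c$, so extending the sum to all of $\F_p$ and restoring the missing $w=0$ term (which after the overall factor is $\sigma(c)\sigma(-4c)=\sigma(-1)$, using $\sigma(c)^2=1$) produces $\sigma(c)\sum_x\sigma(x^3+ax^2-4c)-\sigma(-1)$. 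Finally, $\Delta_w=0$ is the locus $w(w-a)^2-4c=0$, which consists of $n_3$ values of $w$, each satisfying $\sigma(w)=\sigma(c)$; this contributes $-\sigma(c)^2 n_3=-n_3$. Collecting the three pieces yields the claimed formula for (ii).

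For (i), where $c=0$, the point $x=0$ now contributes $\sigma(0)=0$ and drops out; the first moment is $0$ and the condition $\lambda_1=\lambda_2$ collapses to $(x_1-x_2)(x_1+x_2+a)=0$, so $\Var\#\mathcal{C}=\Exp(S^2)=\sum\sigma(x_1x_2)$ summed over nonzero ordered pairs lying on the two lines $x_1=x_2$ and $x_1+x_2=-a$. These are short, direct character-sum counts; the subcases $a=0$ and $a\neq0$ differ only in whether the second line and its intersection with the diagonal behave degenerately. I expect the main obstacle to be the bookkeeping in (ii): correctly isolating the degenerate contributions — the excluded $w=0$ term giving $-\sigma(-1)$ and the double-root locus $\Delta_w=0$ giving $-n_3$ — and verifying that the shift $w=x+a$ matches the cubic $x^3+ax^2-4c$ in the statement.
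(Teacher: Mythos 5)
Your argument is correct, and it reaches the same pivotal object as the paper: after completing the $\lambda$-sum, everything reduces to the weighted count $\Sigma=\sum\sigma(x_1x_2)$ over distinct pairs on the curve $x_1x_2(x_1+x_2+a)=c$, with the diagonal contributing $p-1$ (your checks of the boundary terms with $x_i=0$, and of $\Exp(S)=\sigma(c)$, all agree with the paper's bookkeeping, which instead strips off the constant $\sigma(c)$ by translation invariance and uses the two-linear-factor identity $\sum_\lambda\sigma(\lambda+u)\sigma(\lambda+v)=p\llbracket u=v\rrbracket-1$ rather than your complete quadratic sum $\sum_\lambda\sigma(A\lambda^2+B\lambda+C)$; these are equivalent). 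Where you genuinely diverge is in evaluating $\Sigma$. The paper fibers the curve over the \emph{product}: it substitutes $x:=x/y$, so that for each fixed product the equation becomes a quadratic in $y$ with $1+\sigma\big((ax-c)^2-4x^3\big)$ roots, and then needs an inversive change $x:=1/x$ followed by the affine shift $x:=(x+a)/c$ to land on $\sigma(c)\sum_x\sigma(x^3+ax^2-4c)$. You fiber over the shifted \emph{sum} $w=x_1+x_2+a$ and exploit the relation $\sigma(x_1x_2)=\sigma(c)\sigma(w)$ so that the weight depends only on the fiber; Vieta then gives the fiber count $1+\sigma(\Delta_w)-\llbracket\Delta_w=0\rrbracket$, and a single shift $w=x+a$ produces the cubic. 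Your route is slightly more direct for part (ii) (one substitution instead of three, and the corrections $-\sigma(-1)$ from the missing $w=0$ term and $-n_3$ from the double-root locus come out transparently); the paper's product parametrization has the advantage of working uniformly before splitting into the cases $c=0$ and $c\neq0$, which is why its case (i) falls out of the same master formula \eqref{eq: bw} rather than requiring the separate two-lines count you sketch. Your part (i) is only outlined, but the outline is right and the remaining computation (the diagonal gives $p-1$, the line $x_1+x_2=-a$ gives $(p-1)\sigma(-1)$ or $-\sigma(-1)$ according as $a=0$ or not, minus the intersection point when $a\neq0$) is routine.
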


\begin{thm} \label{thm: A}
Fix $b,c\in \F_p$, not both $0$, and consider the family of curves $\mathcal{C}_\lambda: y^2=x^3+\lambda x^2+bx+c$ parameterized by $\lambda\in \F_p$.
\begin{itemize}
\item[(i)] If $c=0$ then
\begin{align*}
\Var \#\mathcal{C}=p-1-\sigma(b)-\frac{1}{p}.
\end{align*}
\item[(ii)] If $c\neq 0$ then
\begin{align*}
\Var \#\mathcal{C} =p-1-n_3-\frac{1}{p}+\sum_{x\in \F_p} \sigma\big(4cx^3+(bx+c)^2\big)
\end{align*}
where $n_3$ is the number of solutions in $\F_p$ to the equation $x^3-bx-2c=0$.
\end{itemize}
\end{thm}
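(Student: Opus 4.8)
The plan is to work throughout with the quadratic character sums $S_\lambda=\sum_{x\in\F_p}\sigma(x^3+\lambda x^2+bx+c)$, so that $\Var\#\mathcal{C}=\Var S$ as recorded above, and I would compute $\Var S=\Exp(S^2)-\Exp(S)^2$ via \eqref{eq: var2}. The first moment is immediate: interchanging the order of summation,
\[
\sum_{\lambda}S_\lambda=\sum_{x}\sum_{\lambda}\sigma\big(x^2\lambda+(x^3+bx+c)\big),
\]
and for each $x\neq 0$ the argument is an affine function of $\lambda$ with nonzero leading coefficient, so the inner sum vanishes; only $x=0$ survives, contributing $p\,\sigma(c)$. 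Hence $\Exp(S)=\sigma(c)$ and $\Exp(S)^2=\sigma(c)^2$.

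For the second moment I would expand $\sum_{\lambda}S_\lambda^2=\sum_{x_1,x_2}\sum_{\lambda}\sigma\big(f_\lambda(x_1)f_\lambda(x_2)\big)$ using multiplicativity of $\sigma$. Writing $A_i=x_i^3+bx_i+c$, the product $f_\lambda(x_1)f_\lambda(x_2)=(x_1^2\lambda+A_1)(x_2^2\lambda+A_2)$ is a quadratic in $\lambda$ with leading coefficient $x_1^2x_2^2$. When both $x_1,x_2\neq 0$ this is a genuine quadratic, and the key computation is that its $\lambda$-discriminant simplifies to the perfect square $(x_1^2A_2-x_2^2A_1)^2$. The standard evaluation of a quadratic character sum then gives $p-1$ when this discriminant vanishes and $-1$ otherwise, the leading coefficient being a nonzero square. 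The terms with $x_1=0$ or $x_2=0$ are handled directly: if exactly one index is $0$ the inner sum vanishes, and if both are $0$ it contributes $p\,\sigma(c^2)$.

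Assembling these, the second moment is governed by $N$, the number of pairs $(x_1,x_2)\in(\F_p^*)^2$ with $x_1^2A_2=x_2^2A_1$. A short manipulation factors this relation as
\[
(x_2-x_1)\big(x_1^2x_2^2-bx_1x_2-c(x_1+x_2)\big)=0.
\]
The diagonal $x_1=x_2$ contributes $p-1$ pairs; after dividing by $p$ and cancelling $\sigma(c^2)=\sigma(c)^2$ against $\Exp(S)^2$, everything collapses to $\Var S=M+1-\tfrac{1}{p}$, where $M$ counts the off-diagonal solutions on the conic $Q(x_1,x_2):=x_1^2x_2^2-bx_1x_2-c(x_1+x_2)=0$.

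It remains to count $M$, which is where the two cases diverge and where the real work lies. For $c=0$ (so $b\neq 0$) one has $Q=x_1x_2(x_1x_2-b)$, and among the $p-1$ nonzero pairs with $x_1x_2=b$ the diagonal ones number $1+\sigma(b)$; this yields part (i). For $c\neq 0$ I would read $Q=0$ as a quadratic in $x_2$ with $x_1\neq 0$ fixed, whose discriminant is precisely $4cx_1^3+(bx_1+c)^2$ — exactly the expression in the statement — so the number of solutions $x_2$ is $1+\sigma\big(4cx_1^3+(bx_1+c)^2\big)$, all automatically nonzero since the product of the roots is $-c/x_1\neq 0$. Summing over $x_1\neq 0$, completing the sum to all $x_1\in\F_p$ (the $x_1=0$ term being $\sigma(c^2)=1$), and subtracting the diagonal contribution — the nonzero roots of $Q(t,t)=t(t^3-bt-2c)$, i.e. the $n_3$ roots of $x^3-bx-2c$ — produces part (ii). The main obstacle throughout is recognizing the two discriminant identities: that the $\lambda$-discriminant is a perfect square, which turns the second moment into a clean point-count, and that the $x_2$-discriminant of the residual conic is the very Jacobsthal-type sum appearing in the theorem.
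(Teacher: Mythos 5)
Your proposal is correct and follows essentially the same route as the paper: the conic $x_1^2x_2^2-bx_1x_2-c(x_1+x_2)=0$ is exactly the paper's $q(x,y)$, and the diagonal count via $x^3-bx-2c$ and the discriminant $4cx^3+(bx+c)^2$ of the quadratic in $x_2$ coincide with the paper's computation, as does the final assembly $\Var S=M+1-\frac{1}{p}$. The only cosmetic difference is that you evaluate the full quadratic-in-$\lambda$ character sum via its perfect-square discriminant $(x_1^2A_2-x_2^2A_1)^2$, whereas the paper first factors $\sigma(x^2)$ out of each term and invokes $\sum_\lambda \sigma(\lambda+r(x))\sigma(\lambda+r(y))=p\llbracket r(x)=r(y)\rrbracket-1$; these are the same identity in different packaging.
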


We prove these results, and we exemplify them, in Sections~\ref{sec: M}, ~\ref{sec: O}, ~\ref{sec: N} respectively. Theorem~\ref{thm: C} turns out to be the simplest. Theorems~\ref{thm: B} and ~\ref{thm: A} are more technically involved. Visibly, in each one there still is a residual quadratic character sum. We choose examples in which we can evaluate them by means of Jacobsthal sums. 

Our last result is of a different nature. We take the general depressed cubic $y^2=x^3+bx+c$, and we perturb it into the family $y^2=x^3+bx+c+\lambda (x^2-x)$. We compute the variance of the point-count for certain pairs $(b,c)$.

\begin{thm}\label{thm: D}
Fix $b,c\in \F_p$ and consider the family of curves $\mathcal{C}_\lambda : y^2=x^3+bx+c+\lambda (x^2-x)$ parameterized by $\lambda\in \F_p$. Then 
\begin{align*}
\Var \#\mathcal{C}=p-2-\frac{1}{p}+\Sigma(b,c)
\end{align*}
where the value of $\Sigma(b,c)$ is tabulated below for certain choices of $b$ and $c$.
 \begin{table}[h]\renewcommand{\arraystretch}{1.5}
 \begin{tabular}{|c | c|} 
 \hline
 $(b,c)$ &\quad $\Sigma(b,c)$  \\
 \hline\hline
$(0,0)$, $(0,-1)$ \quad&\quad $-1-\sigma(-1)$ \\
\quad  $(-2,0)$, $(-2,1)$ \quad&\quad $-1-\sigma(-1)+\sigma(2) \varphi_2(1)$ \\
\quad $(1,0)$, $(1,-2)$ \quad&\quad $-1-\sigma(2)+\varphi_2(1)$ \\
 \quad $(-1/2,0)$, $(-1/2,-1/2)$ \quad&\quad $-1-\sigma(2)+\sigma(2) \varphi_2(1)$ \\
  \hline
   $(-1,0)$ \quad&\quad $0$ \\
  $(-3,1)$ \quad&\quad $-2\big(1+\sigma(-1)+\sigma(-3)\big)$ \\
    $(1,-1)$ \quad&\quad $-n_4-2\sigma(-1)+\sigma(-1)\varrho(2)+\sigma(2)\varphi_2(1)$ \quad \\
   \hline   
 \end{tabular} 
\end{table}

In the last entry, $n_4$ denotes the number of solutions in $\F_p$ to the quartic equation $x^4-2x^3+x^2-2x+1=0$, and 
\[\varrho(2)=\sum_{x\in \F_p} \sigma(x^3+x^2+2x).\]
\end{thm}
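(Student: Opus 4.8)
The plan is to follow the blueprint from the introduction. Since $\#\mathcal{C}_\lambda = p + S_\lambda$ with $S_\lambda = \sum_x \sigma(f_\lambda(x))$ and $f_\lambda(x) = x^3 + bx + c + \lambda(x^2 - x)$, I will compute $\Var S = \Exp(S^2) - \Exp(S)^2$ by evaluating the two moments. The crucial feature is that $f_\lambda(x)$ is affine in $\lambda$, with slope $u(x) := x^2 - x$ and constant term $A(x) := x^3 + bx + c$. For the first moment I sum over $\lambda$ first: $\sum_\lambda \sigma(u(x)\lambda + A(x))$ vanishes whenever $u(x) \neq 0$ (a nonconstant linear argument) and equals $p\,\sigma(A(x))$ when $u(x) = 0$, i.e.\ $x \in \{0,1\}$. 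This yields $\Exp(S) = \sigma(c) + \sigma(1+b+c)$ at once.

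For the second moment I expand $S_\lambda^2 = \sum_{x,x'} \sigma(f_\lambda(x) f_\lambda(x'))$ and again sum over $\lambda$ first. The inner sum is a quadratic character sum whose argument has leading coefficient $u(x)u(x')$ and, by the identity $(P+P')^2 - 4PP' = (P-P')^2$, discriminant equal to the perfect square $[u(x)A(x') - u(x')A(x)]^2$. I partition pairs $(x,x')$ by how many of $u(x),u(x')$ vanish: the pairs with both slopes zero reproduce exactly $p\,\Exp(S)^2$, and the mixed pairs contribute nothing, so after dividing by $p$ these cancel the $-\Exp(S)^2$. On the pairs $x,x' \notin \{0,1\}$ the standard evaluation gives $-\sigma(u(x)u(x'))$ when the discriminant is nonzero and $(p-1)\sigma(u(x)u(x'))$ when it vanishes. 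The nonzero-discriminant part sums to $-\big(\sum_x \sigma(x(x-1))\big)^2 = -1$, contributing the $-1/p$; the zero-discriminant part is $p$ times a sum over pairs with $u(x)A(x') = u(x')A(x)$, whose diagonal $x=x'$ gives $p-2$. I thus reach $\Var\#\mathcal{C} = p - 2 - 1/p + \Sigma(b,c)$ with
\[\Sigma(b,c) = \sum_{\substack{x \neq x',\ x,x' \notin \{0,1\}\\ h(x)=h(x')}} \sigma\big(u(x)u(x')\big), \qquad h := A/u.\]

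It remains to evaluate $\Sigma(b,c)$ for each tabulated pair. In the symmetric coordinates $s = x+x'$, $q = xx'$, the condition $h(x)=h(x')$ becomes the single equation $H(s,q) := q^2 - (s+b)q - c(s-1) = 0$, the weight becomes $\sigma(q)\sigma(q-s+1)$, and a pair of distinct $\F_p$-points exists precisely when $s^2-4q \in (\F_p^*)^2$; thus $\Sigma(b,c) = 2\sum \sigma(q)\sigma(q-s+1)$ over solutions of $H=0$ with $s^2-4q$ a nonzero square. Viewed as a quadratic in $q$, the discriminant of $H$ is $(s+b)^2 + 4c(s-1)$, which is a perfect square in $s$ exactly when $c(b+c+1)=0$. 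This dichotomy organizes the table. For the four paired rows ($c=0$ or $b+c+1=0$) the polynomial $H$ splits into two linear-in-$q$ branches; on one branch the factor $\sigma(q-s+1)$ vanishes identically, and the surviving branch reduces $\Sigma$ to an elementary sum $\sum_s \sigma(\text{linear})$ cut out by a square condition, which I evaluate by inserting $\tfrac12(1+\sigma(\,\cdot\,))$ and using $\sum_s \sigma(\text{linear}) = 0$. The pairing itself is explained by the involution $x \mapsto 1-x$, which fixes $u$ and sends $(b,c)$ to $(b,-(1+b+c))$ while preserving both the level sets of $h$ and the weight; hence $\Sigma(b,c) = \Sigma(b,-(1+b+c))$, and the self-dual locus $c = -(1+b)/2$ is exactly the three singleton rows.

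The hard part is the two genuinely nonfactoring rows $(-3,1)$ and $(1,-1)$, where $c(b+c+1) \neq 0$ and $H=0$ is an irreducible conic. There I parametrize rationally by solving $s = (q^2 - bq + c)/(q+c)$ and substitute into both the weight and the square condition; this produces $\sigma$ applied to a rational function of $q$, and clearing denominators converts it into a Jacobsthal-type sum. I expect all the bookkeeping to concentrate here: tracking the excluded value $q=-c$, the $q$ for which $s^2-4q$ fails to be a nonzero square, and the locus $q-s+1=0$, and then matching the residual cubic and quartic character sums to the quantities $\varphi_2(1)$, $\varrho(2)$, and the quartic point-count $n_4$ recorded in the final row. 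The structural reduction is uniform across the table; the remaining effort lies entirely in these case-specific Jacobsthal evaluations.
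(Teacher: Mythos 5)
Your structural reduction is correct, and it reaches the paper's formula $\Var\#\mathcal{C}=p-2-\tfrac1p+\Sigma(b,c)$ by a genuinely different route: you sum over $\lambda$ first, using the complete evaluation of $\sum_\lambda\sigma(\text{quadratic in }\lambda)$ and the perfect-square discriminant $[u(x)A(x')-u(x')A(x)]^2$, whereas the paper first linearizes in $\lambda$ via the involution $x\mapsto x/(x-1)$ of $\F_p\setminus\{0,1\}$ and then applies the orthogonality relation \eqref{eq: qjs3}. The two residual double sums agree: under $x\mapsto x/(x-1)$ your condition $h(x)=h(x')$ becomes the paper's $r(x)=r(y)$ and your weight $\sigma(u(x)u(x'))$ becomes $\sigma(xy)$, and your $H(s,q)=q^2-(s+b)q-c(s-1)$ is exactly the symmetrization of the paper's $q(x,y)$ (I checked: $A(x)u(y)-A(y)u(x)=(x-y)H(x+y,xy)$). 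Your duality observation and the dichotomy $c(b+c+1)=0$ also match the paper's. The passage to symmetric coordinates $(s,q)$ is an attractive alternative to the paper's substitution $x:=x/y$, and for the split rows it does work --- e.g.\ for $(0,0)$ the surviving branch $q=s+b$ gives $\sum_s\sigma(s)\sigma(s^2-4s)=-\sigma(-1)$ plus a double-root correction of $-1$, recovering $-1-\sigma(-1)$. One caution there: the constant terms in the table come precisely from the locus $s^2-4q=0$ that your indicator $\tfrac12(1+\sigma(\cdot))$ miscounts, so that correction is not optional bookkeeping.

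The genuine gap is that the table entries --- which are the actual content of the theorem --- are not proved but only scheduled, and for the last row the feasibility of your plan is asserted rather than demonstrated. You write that you will ``match the residual cubic and quartic character sums'' to $\varphi_2(1)$, $\varrho(2)$ and $n_4$, but nothing in your setup guarantees this is possible. In the paper the $(1,-1)$ case hinges on a specific nontrivial identity for quadratic character sums with \emph{palindromic quintic} arguments (\cite[Thm.5.20]{N}), followed by the twisted symmetry $\varrho(\tfrac14-c)=\sigma(-2)\varrho(c)$ and the evaluation $\varrho(1/36)=\sigma(-3)\varphi_2(1)$; your rational parametrization $s=(q^2-bq+c)/(q+c)$ of the conic produces a different quintic, $\sum_q\sigma\big((q+c)\,[(q^2-bq+c)^2-4q(q+c)^2]\big)$, which is not visibly palindromic, so you still owe an argument that it reduces to the tabulated quantities. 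Relatedly, for $(-3,1)$ you classify the conic as ``genuinely nonfactoring,'' but the paper's computation there is elementary because $q(x,y)$ factors as $-(xy-x+1)(xy-y+1)$ --- a factorization invisible in $(s,q)$ coordinates (it is only the product of the two factors that is symmetric); your route is salvaged by the happy accident that $(q^2+3q+1)^2-4q(q+1)^2=(q^2+q+1)^2$, but that too needs to be exhibited. Until those evaluations are carried out, the theorem's table is unverified.
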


The case $(b,c)=(-3,1)$ recovers first- and second-moment computations by Miller \cite[Sec.10.3]{Mil1}. In the case $(b,c)=(-1,0)$, we have $x^3+bx+c+\lambda (x^2-x)=x^3-x+\lambda(x^2-x)=x(x-1)(x+\lambda+1)$. Up to a slight reparameterization, we recover the Legendre family $\{y^2=x(x-1)(x-\lambda): \lambda\in \F_p\}$ previously considered by Hopf \cite{H} and Yamauchi \cite{Y}. 

The quantity $\varphi_2(1)$ that appears in several places in the table above is an instance of a Jacobsthal sum; we elucidate its meaning, and its value, in the next section. We find the computations in the second, third, and fourth rows of the table as the most satisfactory. The computation in the case $(b,c)=(1,-1)$ is not fully explicit, as we do not have a formula for the term $\varrho(2)$. We work out Theorem~\ref{thm: D} in Section~\ref{sec: S}.

Throughout, our methods involve quadratic character sums. For comparison, Birch \cite{B} and He--McLaughlin \cite{HM} use the viewpoint of exponential sums. Most of our results are intractable through their exponential viewpoint. But Birch \cite{B} and He--McLaughlin \cite{HM} also obtain higher moment computations for certain families of cubics--specifically, moments of order $4$, $6$, and $8$ in \cite{B}, respectively moments of order $3$ in \cite{HM}. We are unable to compute higher moments via quadratic character sums.

\medskip
\section{Preliminaries}
\subsection{The quadratic character} The quadratic character $\sigma:\F_p\to \C^*$ is defined as follows. For $a\neq 0$, set $\sigma(a)=1$ if $a\in (\F_p^*)^2$, that is to say, if $a$ is a square in $\F^*_p$, and set $\sigma(a)=-1$ if $a\not\in (\F_p^*)^2$, that is to say, if $a$ is not a square in $\F^*_p$. For $a=0$, set $\sigma(0)=0$.

Several properties of the quadratic character will frequently come up. Firstly, $\sigma$ is multiplicative: $\sigma(ab)=\sigma(a)\sigma(b)$ for all $a,b\in \F_p$. Secondly, $\sigma$ counts solutions to quadratic equations: the equation $y^2=c$ has $1+\sigma(c)$ solutions in $\F_p$; more generally, the equation $ay^2+by+c=0$ has $1+\sigma(b^2-4ac)$ solutions in $\F_p$ whenever $a\neq 0$. Thirdly, the following summation formulas hold:
\begin{align}\label{eq: qjs1}
\sum_{x\in \F_p} \sigma(x)= 0,
\end{align}
and
\begin{align}\label{eq: qjs2}
\sum_{x\in \F_p} \sigma(x^2+c)= p\llbracket c=0\rrbracket-1,
\end{align}
\begin{align}\label{eq: qjs3}
\sum_{x\in \F_p} \sigma(x+c)\sigma(x+c')= p\llbracket c=c'\rrbracket-1.
\end{align}
Here and in what follows we use the Iverson bracket notation: if $P$ is a statement, then $\llbracket P\rrbracket=1$ if $P$ is true, respectively $\llbracket P\rrbracket=0$ if $P$ is false.

\subsection{Cubic Jacobsthal sums}
There are two families of Jacobsthal sums that will come up in what follows:
\begin{align}
\varphi_2(c)&=\sum_{x\in \F_p} \sigma(x^3+cx), \\
\psi_3(c)&=\sum_{x\in \F_p} \sigma(x^3+c)
\end{align}
where $c\in \F^*_p$. As the indexing suggests, these quadratic character sums with cubic arguments have higher degree analogues. They will not be needed in this paper; we refer the interested reader to the recent monograph \cite{N} for much more on properties and applications of Jacobsthal sums.

The cubic Jacobsthal sums $\varphi_2(c)$ and $\psi_3(c)$ can be evaluated in terms of decompositions of $p$ into sums of squares. The following two lemmas collect results from \cite[Chapter 3]{N}, in a form that is convenient for our purposes.

\begin{lem}\label{lem: J1} The following hold for $c\in \F_p^*$:
\begin{itemize}
\item[(i)] if $p\equiv 3$ mod $4$ then $\varphi_2(c)=0$;
\item[(ii)] if $p\equiv 1$ mod $4$ then
\[\varphi_2(c)=\begin{cases}
\pm 2 A_2 & \textrm{ if } c\in (\F_p^*)^2 \\
\pm 2 B_2 & \textrm{ if } c\notin (\F_p^*)^2
\end{cases}
\]
\end{itemize}
where $p=A_2^2+B_2^2$, with the convention that $A_2\equiv -1$ mod $4$. In the case $c\in (\F_p^*)^2$, say $c=s^2$, the sign is determined as $\varphi_2(s^2)=\sigma(s)\cdot 2A_2$. In particular $\varphi_2(1)=2A_2$.
\end{lem}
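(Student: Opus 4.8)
The plan is to handle part (i) instantly, reduce part (ii) to two unknown values, and then pin those values down by a second-moment identity together with a parity analysis; the only genuinely hard point will be the sign.

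For (i), substitute $x\mapsto -x$ in $\varphi_2(c)=\sum_x\sigma(x^3+cx)$. Since $\sigma\big(-(x^3+cx)\big)=\sigma(-1)\sigma(x^3+cx)$, this yields $\varphi_2(c)=\sigma(-1)\varphi_2(c)$, and when $p\equiv 3$ mod $4$ we have $\sigma(-1)=-1$, forcing $\varphi_2(c)=0$. For (ii), I first record a scaling relation. Writing $x^3+cx=x(x^2+c)$ and substituting $x=su$ for $s\in\F_p^*$ gives
\[\varphi_2(s^2c)=\sum_u\sigma\big(s^3(u^3+cu)\big)=\sigma(s)\,\varphi_2(c),\]
using $\sigma(s^3)=\sigma(s)$. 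Taking $c=1$ gives $\varphi_2(s^2)=\sigma(s)\varphi_2(1)$, which is the ``in particular'' claim once $\varphi_2(1)=2A_2$ is known. Moreover, since every non-square is $gs^2$ for a fixed non-square $g$, squaring the scaling relation shows $\varphi_2(c)^2$ equals $\varphi_2(1)^2$ on every nonzero square and $\varphi_2(g)^2$ on every non-square.

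Next I compute a second moment. As $\varphi_2(0)=\sum_x\sigma(x)=0$, I sum over all $c\in\F_p$:
\[\sum_{c}\varphi_2(c)^2=\sum_{x,y}\sigma(x)\sigma(y)\sum_c\sigma(x^2+c)\sigma(y^2+c)=p\sum_{x^2=y^2}\sigma(x)\sigma(y),\]
where the inner sum is evaluated by \eqref{eq: qjs3}, and the leftover $-1$ contributes $-\big(\sum_x\sigma(x)\big)^2=0$. For $p\equiv 1$ mod $4$ the right-hand side is $p\cdot 2(p-1)$. Comparing with $\tfrac{p-1}{2}\big(\varphi_2(1)^2+\varphi_2(g)^2\big)$, coming from the two-value structure above, gives $\varphi_2(1)^2+\varphi_2(g)^2=4p$. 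Pairing $x$ with $-x$ (and using $\sigma(-1)=1$) shows each $\varphi_2(c)$ is an even integer, so writing $\varphi_2(1)=2A_2$ and $\varphi_2(g)=2B_2$ yields $A_2^2+B_2^2=p$. To see that $A_2$ is the odd summand, I reduce the half-sum $A_2=\sum_{x\in H}\sigma(x^3+x)$ (with $H$ a set of representatives of $\F_p^*/\{\pm1\}$) modulo $2$: since $\sigma$ is $\pm1$ off its single zero, $A_2\equiv\#\{x\in H: x^3+x\neq 0\}\equiv\tfrac{p-1}{2}-1\pmod 2$, the single excluded representative being the class of a square root of $-1$. As $p\equiv 1$ mod $4$ makes $\tfrac{p-1}{2}$ even, $A_2$ is odd and $B_2$ even. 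This settles the magnitudes $\pm 2A_2$ and $\pm 2B_2$ exactly as stated.

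The remaining step, and the one I expect to be the main obstacle, is the normalization $A_2\equiv -1$ mod $4$, equivalently the precise sign of $\varphi_2(1)$; everything above is elementary, but the sign is not. My plan is to introduce a quartic character $\chi$ with $\chi^2=\sigma$, collapse $\varphi_2(1)=\sum_x\sigma(x)\sigma(x^2+1)$ to a sum over squares $t=x^2$ via $\sigma(\sqrt{t})=\chi(t)$, and thereby obtain $\varphi_2(1)=2\,\mathrm{Re}\big(\chi(-1)J(\chi,\chi^2)\big)$, reducing the question to the value of the quartic Jacobi sum. The mod-$4$ refinement then follows from the classical congruence for Jacobi sums of order four (in the spirit of Gauss's theory of biquadratic residues), which fixes $A_2$ modulo $4$; alternatively one can attempt the corresponding mod-$4$ evaluation of the half-sum directly. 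Either way, this congruence-theoretic determination of the sign is the delicate, non-elementary input, in contrast to the magnitude computation, which rests only on \eqref{eq: qjs3} and simple pairing.
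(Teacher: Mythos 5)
The paper does not actually prove Lemma~\ref{lem: J1}; it imports it from \cite[Chapter 3]{N}, so your reconstruction has no in-paper proof to be measured against. That said, most of what you wrote is correct and genuinely elementary: the vanishing for $p\equiv 3\pmod 4$ via $x\mapsto -x$, the scaling relation $\varphi_2(s^2c)=\sigma(s)\varphi_2(c)$ (which gives the sign rule $\varphi_2(s^2)=\sigma(s)\varphi_2(1)$ for free), the second-moment identity $\varphi_2(1)^2+\varphi_2(g)^2=4p$ via \eqref{eq: qjs3}, the evenness of $\varphi_2(c)$ from the $\pm x$ pairing, and the parity argument showing $A_2=\tfrac12\varphi_2(1)$ is odd (the count $\tfrac{p-1}{2}-1$ is right: $x^2=-1$ contributes exactly one class in $\F_p^*/\{\pm 1\}$). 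This correctly pins down everything in the lemma up to the sign of $A_2$, i.e.\ it proves the two displayed $\pm$ evaluations and the relation $\varphi_2(s^2)=\sigma(s)\varphi_2(1)$.

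The genuine gap is the normalization $A_2\equiv -1\pmod 4$, equivalently $\varphi_2(1)\equiv 6\pmod 8$, which you explicitly defer. This is not a cosmetic convention: it is the content of the assertions ``$\varphi_2(s^2)=\sigma(s)\cdot 2A_2$'' and ``$\varphi_2(1)=2A_2$'' once $A_2$ is the \emph{normalized} odd summand, and it is exactly the part of Jacobsthal's theorem that cannot be extracted from the second moment or from parity. Your proposed route (write $\varphi_2(1)=\sum_x\sigma(x)\sigma(x^2+1)$, pass to a quartic character $\chi$ with $\chi^2=\sigma$, express the sum through the Jacobi sum $J(\chi,\chi)$ or $J(\chi,\sigma)$, and invoke the congruence $J\equiv -1$ modulo $(1+i)^3$ in $\Z[i]$) is the standard and workable one, but as written it is a plan, not a proof: you have not verified the identity relating $\varphi_2(1)$ to the real part of the Jacobi sum (the factor $\chi(-1)$ and the choice between the two quartic characters both affect the sign), nor stated which classical congruence you are using and why it yields $A_2\equiv -1$ rather than $+1\pmod 4$. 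Until that computation is carried out, the last two sentences of the lemma remain unproved.
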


\begin{lem}\label{lem: J2} The following hold for $c\in \F_p^*$:
\begin{itemize}
\item[(i)] if $p\equiv 2$ mod $3$ then $\psi_3(c)=0$;
\item[(ii)] if $p\equiv 1$ mod $3$ then
\[\sigma(c)\psi_3(c)=\begin{cases}
2A_3 & \textrm{ if } c\in (\F_p^*)^3 \\
-A_3\pm 3B_3 & \textrm{ if } c\notin (\F_p^*)^3
\end{cases}
\]
\end{itemize}
where $p=A_3^2+3B_3^2$, with the convention that $A_3\equiv -1$ mod $3$. In particular $\psi_3(1)=2A_3$.
\end{lem}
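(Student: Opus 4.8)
The plan is to evaluate $\psi_3(c)=\sum_{x\in\F_p}\sigma(x^3+c)$ by separating on the residue of $p$ modulo $3$, since what controls the sum is whether cubing is a bijection of $\F_p$. Part (i) is immediate: when $p\equiv 2\pmod 3$ one has $\gcd(3,p-1)=1$, so $x\mapsto x^3$ permutes $\F_p$; reindexing by $u=x^3$ and then $v=u+c$ collapses the sum to $\sum_{v\in\F_p}\sigma(v)=0$ by \eqref{eq: qjs1}.

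For part (ii), assume $p\equiv 1\pmod 3$ and fix a cubic character $\chi$ on $\F_p^*$ (extended by $\chi(0)=0$). The number of cube roots of $t$ is $1+\chi(t)+\overline\chi(t)$ for every $t\in\F_p$ (this gives $3$ on cubes, $0$ on non-cubes, and $1$ at $t=0$). Grouping $\psi_3(c)$ according to $t=x^3$ yields $\psi_3(c)=\sum_{t}\big(1+\chi(t)+\overline\chi(t)\big)\sigma(t+c)$; the constant term $\sum_t\sigma(t+c)$ vanishes by a shift of \eqref{eq: qjs1}, leaving two twisted sums. In each, substitute $t=-cs$ and use $\chi(-1)=1$ (because $-1=(-1)^3$ is always a cube) together with $\sigma(-cs+c)=\sigma(c)\sigma(1-s)$; this converts $\sum_t\chi(t)\sigma(t+c)$ into $\chi(c)\sigma(c)J(\chi,\sigma)$, where $J(\chi,\sigma)=\sum_s\chi(s)\sigma(1-s)$ is a Jacobi sum. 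Since $\sigma$ is real, $\overline{J(\chi,\sigma)}=J(\overline\chi,\sigma)$, and multiplying through by $\sigma(c)$ (recall $\sigma(c)^2=1$) I arrive at the clean identity $\sigma(c)\psi_3(c)=2\Re\big(\chi(c)J(\chi,\sigma)\big)$.

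It remains to read off the three values of the right-hand side. Because $\chi$ and $\chi\sigma$ (which has order $6$) are nontrivial, $|J(\chi,\sigma)|^2=p$, and since the values of $\chi\sigma$ are sixth roots of unity, $J(\chi,\sigma)$ lies in $\Z[\zeta_6]=\Z[\omega]$, the Eisenstein integers. The normalization imported from \cite[Chapter 3]{N} is that $J(\chi,\sigma)$ in fact lies in the suborder $\Z[\sqrt{-3}]$, say $J(\chi,\sigma)=A_3+B_3\sqrt{-3}$ with $A_3^2+3B_3^2=p$ and $A_3\equiv-1\pmod 3$. Granting this, and using $\Re(\sqrt{-3})=0$, a short computation with $\chi(c)\in\{1,\omega,\omega^2\}$ gives $2\Re(J)=2A_3$ when $c$ is a cube, and $2\Re(\omega^{\pm1}J)=-A_3\mp 3B_3$ when $c$ is a non-cube; this is exactly the stated dichotomy, the sign recording the non-cube class of $c$, and $c=1$ gives $\psi_3(1)=2A_3$.

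The genuinely delicate step is the normalization in the previous paragraph: verifying that $J(\chi,\sigma)$ has even $\omega$-coordinate (so that it lies in $\Z[\sqrt{-3}]$ and $2\Re(J)$ is an even integer) and that the sign of $A_3$ is pinned down by $A_3\equiv-1\pmod 3$. These are congruence properties of Jacobi and Gauss sums; I would either cite them from \cite{N} or deduce them from the factorization $J(\chi,\sigma)=g(\chi)g(\sigma)/g(\chi\sigma)$ together with the known prime decomposition of Gauss sums in $\Z[\omega]$. As an a priori sanity check that the answer can only depend on the cubic class of $c$, the substitution $x\mapsto tx$ gives $\psi_3(t^3c)=\sigma(t)\psi_3(c)$, hence $\sigma(t^3c)\psi_3(t^3c)=\sigma(c)\psi_3(c)$: the quantity $\sigma(c)\psi_3(c)$ is constant on cubic-residue classes, which is precisely why three values suffice.
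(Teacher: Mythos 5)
Your proposal is correct, but note that the paper does not actually prove Lemma~\ref{lem: J2}: both cubic Jacobsthal evaluations are imported wholesale from \cite[Chapter 3]{N} (``The following two lemmas collect results from \cite[Chapter 3]{N}''), so there is no internal proof to compare against, and your Jacobi-sum argument supplies one. Your skeleton is the standard route and it checks out: part (i) via bijectivity of cubing when $\gcd(3,p-1)=1$; in part (ii) the cube-root count $1+\chi(t)+\overline{\chi}(t)$ is handled correctly at $t=0$, the substitution $t=-cs$ together with $\chi(-1)=\chi\big((-1)^3\big)=1$ and $\sigma(-cs+c)=\sigma(c)\sigma(1-s)$ gives $\sigma(c)\psi_3(c)=2\,\mathrm{Re}\big(\chi(c)J(\chi,\sigma)\big)$, and the real-part computations with $\chi(c)\in\{1,\omega,\omega^2\}$ do reproduce $2A_3$ and $-A_3\pm 3B_3$ once $J(\chi,\sigma)=A_3+B_3\sqrt{-3}$. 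Your closing sanity check, $\sigma(t^3c)\psi_3(t^3c)=\sigma(c)\psi_3(c)$, correctly explains why three values suffice.

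The one step you defer --- that $J(\chi,\sigma)$ lies in $\Z[\sqrt{-3}]$ rather than merely in $\Z[\omega]$, with $A_3\equiv -1 \pmod 3$ --- is indeed where all the arithmetic content sits, and you are right to flag it: without it, $2\,\mathrm{Re}(J)$ is an integer but need not have the form $2A_3$ with $p=A_3^2+3B_3^2$. It is a classical congruence for the quadratic--cubic Jacobi sum, deducible as you indicate from $J(\chi,\sigma)=g(\chi)g(\sigma)/g(\chi\sigma)$ and the prime factorization of Gauss sums in $\Z[\omega]$; since the paper itself cites \cite{N} for the entire lemma, citing that source for this normalization leaves you no worse off than the paper. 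One small point worth making explicit: $3\nmid A_3$ because $A_3^2\equiv p\equiv 1\pmod 3$, so the condition $A_3\equiv -1\pmod 3$ genuinely pins down the sign of $A_3$, while the sign of $B_3$ remains ambiguous --- which is exactly the residual $\pm$ in the non-cube case of the statement.
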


\medskip
\section{The family $\mathcal{C}_\lambda: y^2=x^3+a x^2+bx+\lambda$}\label{sec: M}
\begin{proof}[Proof of Theorem~\ref{thm: C}] For $\lambda\in\F_p$, set
\begin{align*}
S_\lambda= \sum_{x\in \F_p} \sigma(x^3+ax^2+bx+\lambda).
\end{align*}
Put $r(x)=x^3+ax^2+bx$, so
\[S_\lambda= \sum_{x\in \F_p} \sigma(\lambda+r(x)).\]

We set out to compute the variance of $(S_\lambda)_{\lambda\in \F_p}$. Using \eqref{eq: qjs1}, we see that $\Exp (S)=0$. Thus $\Var S=\Exp (S^2)$. Now, using \eqref{eq: qjs3}, we have
\begin{align*}
\sum_{\lambda\in \F_p} S_\lambda^2&= \sum_{x,y\in \F_p} \sum_{\lambda\in \F_p} \sigma(\lambda+r(x)) \sigma(\lambda+r(y))\\
&= \sum_{x,y\in \F_p} \Big(p\llbracket r(x)=r(y)\rrbracket-1\Big)\\
&= p\sum_{x\neq y\in \F_p}\llbracket r(x)=r(y)\rrbracket.
\end{align*}
In the last step, the diagonal contribution $p\sum_{x=y\in \F_p}\llbracket f(x)=f(y)\rrbracket=p^2$ is cancelled by $\sum_{x,y\in \F_p} 1=p^2$. For $x\neq y$ we have $\llbracket r(x)=r(y)\rrbracket=\llbracket q(x,y)=0\rrbracket$ where \[q(x,y)= \frac{r(x)-r(y)}{x-y}=x^2+xy+y^2+a(x+y)+b.\]
Thus
\begin{align*}
\Var S&=\frac{1}{p}\sum_{\lambda\in \F_p} S_\lambda^2=\sum_{x\neq y\in \F_p}\llbracket  q(x,y)=0\rrbracket\\
&=\sum_{x, y\in \F_p}\llbracket q(x,y)=0\rrbracket-\sum_{x\in \F_p}\llbracket q(x,x)=0\rrbracket.
\end{align*}
The latter sum, $\sum_{x\in \F_p}\llbracket q(x,x)=0\rrbracket$, counts the number of solutions to the equation $q(x,x)=0$, that is to say, $3x^2+2ax+b=0$; hence it evaluates as $1+\sigma(a^2-3b)$. Likewise, for each $x\in \F_p$ the sum $\sum_{y\in \F_p}\llbracket q(x,y)=0\rrbracket$ counts the number of solutions to the quadratic equation $y^2+(x+a)y+x^2+ax+b=0$. Here the discriminant is $(x+a)^2-4(x^2+ax+b)=-3x^2-2ax+a^2-4b$, so the quadratic equation has $1+\sigma(-3x^2-2ax+a^2-4b)$ solutions. Thus
\begin{align*}
\sum_{x, y\in \F_p}\llbracket q(x,y)=0\rrbracket&=\sum_{x\in \F_p} \big(1+\sigma(-3x^2-2ax+a^2-4b)\big)\\
&=p+\sigma(-3)\sum_{x\in \F_p} \sigma\big(x^2+2ax-3(a^2-4b)\big)\\
&=p+\sigma(-3)\sum_{x\in \F_p} \sigma\big((x+a)^2-4(a^2-3b)\big)\\
&=p+\sigma(-3) \big(p\llbracket a^2-3b=0\rrbracket-1\big).
\end{align*}
We have effected a change of variables $x:=x/3$ in the second step, and we have used \eqref{eq: qjs2} in the last step. 

Combining these calculations, we get
\begin{align*}
\Var S&=p+\sigma(-3) \big(p\llbracket a^2-3b=0\rrbracket-1\big)-\big(1+\sigma(a^2-3b)\big)\\
&=\begin{cases}
p-1-\sigma(-3)-\sigma(a^2-3b) & \textrm{ if } a^2\neq 3b,\\
\big(1+\sigma(-3)\big)(p-1) & \textrm{ if } a^2=3b.
\end{cases}
\end{align*}
This completes the proof of Theorem~\ref{thm: C}.
\end{proof}

\begin{ex}
Fix $b\in \F_p^*$. For the family of curves $\mathcal{C}_\lambda: y^2=x^3+bx+\lambda$, the variance of the point-count is
\begin{align}\label{eq: ca0}
\Var \#\mathcal{C}=p-1-\sigma(-3)-\sigma(-3b).
\end{align}
We are thus recovering, in a variance format, first- and second-moment computations by He and McLaughlin \cite[Thms.3,5]{HM}. Their methods are, however, different--based on exponential sums.
\end{ex}

\begin{ex} Fix $b\in \F_p^*$. We compute the variance of the number of points over the family $\mathcal{C}_\lambda :y^2=x^3+\lambda^2(bx+1)$ parameterized by $\lambda\in \F_p$. Miller \cite[Sec.14.1]{Mil1} has considered this family in the case $b=-1$. Although the result of Theorem~\ref{thm: C} does not directly address this family, we will see that one can still establish a link. 

As usual, $\Var \#\mathcal{C}=\Var T$ where $T=(T_\lambda)_{\lambda\in \F_p}$ is given by
\begin{align*}
T_\lambda= \sum_{x\in \F_p} \sigma\big(x^3+\lambda^2(bx+1)\big).
\end{align*}
We evaluate the expected value of $T$ directly. We have
\begin{align*}
\sum_{\lambda\in \F_p}T_\lambda= \sum_{x\in \F_p} \sum_{\lambda\in \F_p} \sigma\big((bx+1)\lambda^2+x^2\big)
\end{align*}
and the inner sum can be evaluated as follows: when $x=0$, it equals $p-1$; when $bx+1=0$, it equals $p$; when $x\neq 0$ and $bx+1\neq 0$, it equals $-\sigma(bx+1)$ thanks to ~\eqref{eq: qjs2}. Overall:
\begin{align*}
\sum_{\lambda\in \F_p}T_\lambda
&=(p-1)+p-\sum_{x\neq 0,\; bx+1\neq 0} \sigma(bx+1)\\
&=2p-1-(-\sigma(1))=2p
\end{align*}
where, on the way, we have used ~\eqref{eq: qjs1} once again. Therefore $\Exp (T)=2$.

We now turn to the expected value of $T^2$. Note that $T_0=0$. For $\lambda\in \F^*_p$, the change of variable $x:=\lambda x$ gives
\begin{align*}
T_\lambda=\sigma(\lambda)\sum_{x\in \F_p} \sigma(x^3+bx+\lambda^{-1}).
\end{align*}
Hence $T_{\lambda^{-1}}=\sigma(\lambda)S_\lambda$, where 
\begin{align*}
S_\lambda=\sum_{x\in \F_p} \sigma(x^3+b x+\lambda).
\end{align*}
Therefore
\begin{align*}
\Exp (T^2)=\frac{1}{p} \sum_{\lambda\in \F_p^*} T_\lambda^2=\frac{1}{p} \sum_{\lambda\in \F_p^*} S_\lambda^2=\Exp (S^2)-\frac{1}{p}S_0^2.
\end{align*}
Now, $S_0=\varphi_2(b)$. We also know from the proof of Theorem~\ref{thm: C} and \eqref{eq: ca0} above, that $\Exp (S^2)=p-1-\sigma(-3)-\sigma(-3b)$. We deduce that 
\begin{align*}
\Exp (T^2)=p-1-\sigma(-3)-\sigma(-3b)-\varphi_2(b)^2/p.
\end{align*}
All in all, we conclude that
\begin{align}
\Var \#\mathcal{C}=p-5-\sigma(-3)-\sigma(-3b)-\varphi_2(b)^2/p.
\end{align}

We can be even more explicit, by appealing to Lemma~\ref{lem: J1}:
\begin{align}
\Var \#\mathcal{C}=\begin{cases}
p-5-\sigma(-3)-\sigma(-3b) & \textrm{ if } p\equiv 3 \textrm{ mod } 4;\\
p-5-2\sigma(-3)-4A_2^2/p & \textrm{ if } p\equiv 1 \textrm{ mod } 4, b\in (\F_p^*)^2,\\
p-5-4B_2^2/p & \textrm{ if } p\equiv 1 \textrm{ mod } 4, b\not\in (\F_p^*)^2.
\end{cases}
\end{align}
\end{ex}

\medskip
\section{The family $\mathcal{C}_\lambda: y^2=x^3+a x^2+\lambda x+c$}\label{sec: O}
\begin{proof}[Proof of Theorem~\ref{thm: B}] For $\lambda\in\F_p$, set
\begin{align*}
S_\lambda= \sum_{x\in \F_p} \sigma(x^3+ax^2+\lambda x+c).
\end{align*}
We aim to compute the variance of $S=(S_\lambda)_{\lambda\in \F_p}$. For $x\in \F_p^*$, put 
\begin{align}
r(x)=\frac{x^3+ax^2+c}{x}
\end{align} 
so that
\[S_\lambda= \sigma(c)+\sum_{x\in \F^*_p}\sigma(x) \sigma(\lambda+r(x)).\]
Thanks to translation invariance, the variance of $S=(S_\lambda)_{\lambda\in \F_p}$ equals that of $T=(T_\lambda)_{\lambda\in \F_p}$, where 
\begin{align}
T_\lambda=\sum_{x\in \F^*_p}\sigma(x) \sigma(\lambda+r(x)).
\end{align}

By using \eqref{eq: qjs1} we see that $\Exp (T)=0$:
\[\sum_{\lambda\in \F_p} T_\lambda=\sum_{x\in \F^*_p}\sigma(x) \sum_{\lambda\in \F_p} \sigma(\lambda+r(x))=0.\]
 Thus $\Var T=\Exp (T^2)$. Now, using \eqref{eq: qjs3}, we see that
\begin{align*}
\sum_{\lambda\in \F_p} T_\lambda^2&=\sum_{x,y\in \F^*_p} \sigma(xy)\sum_{\lambda\in \F_p} \sigma(\lambda+r(x))\sigma(\lambda+r(y))\\
&=\sum_{x,y\in \F^*_p} \sigma(xy) \Big(p\llbracket r(x)=r(y)\rrbracket-1\Big).
\end{align*}
We note that $\sum_{x,y\in \F^*_p} \sigma(xy)=\big(\sum_{x\in \F^*_p} \sigma(x)\big)^2=0$. Hence
\begin{align*}
\Var T&=\frac{1}{p}\sum_{\lambda\in \F_p} T_\lambda^2= \sum_{x,y\in \F^*_p} \sigma(xy) \llbracket r(x)=r(y)\rrbracket\\
&=p-1+ \sum_{x\neq y\in \F^*_p} \sigma(xy) \llbracket r(x)=r(y)\rrbracket.
\end{align*}
Let $\Sigma$ denote the latter double sum. Whenever $x\neq y\in \F_p^*$, we have $\llbracket r(x)=r(y)\rrbracket=\llbracket q(x,y)=0\rrbracket$ where \[q(x,y)= xy\cdot\frac{r(x)-r(y)}{x-y}=xy(x+y)+axy-c.\]
Thus
\begin{align*}
\Sigma&=\sum_{x\neq y\in \F^*_p}\sigma(xy) \llbracket  q(x,y)=0\rrbracket\\
&=\sum_{x,y\in \F^*_p}\sigma(xy)\llbracket  q(x,y)=0\rrbracket-\sum_{x\in \F^*_p}\llbracket  q(x,x)=0\rrbracket
\end{align*}
by reinstating the diagonal in the main double sum. 

The equation $q(x,x)=0$ amounts to $2x^3+ax^2-c=0$ which, upon scaling $x:=x/2$, turns into $x^3+ax^2-4c=0$. Thus
$\sum_{x\in \F^*_p}\llbracket  q(x,x)=0\rrbracket$ evaluates as $n_3^*=\#\{x\in \F_p^*: x^3+ax^2-4c=0\}$. In the main double sum, we make the change of variable $x:=x/y$ for each $y\in \F_p^*$:
\begin{align*}
\sum_{x,y\in \F^*_p}\sigma(xy)\llbracket q(x,y)=0\rrbracket=\sum_{x\in \F_p^*} \sigma(x)\sum_{y\in \F^*_p} \llbracket q(x/y,y)=0\rrbracket.
\end{align*}
Now, for each $x\in \F_p^*$, $q(x/y,y)=0$ amounts to the quadratic equation $xy^2+(ax-c)y+x^2=0$. This has $1+\sigma\big((ax-c)^2-4x^3\big)$ solutions $y\in \F_p$, and each one is automatically non-zero. Thus
\begin{align*}
\sum_{x\in \F_p^*} \sigma(x)\sum_{y\in \F^*_p} \llbracket q(x/y,y)=0\rrbracket&=\sum_{x\in \F_p^*} \sigma(x) \Big(1+\sigma\big((ax-c)^2-4x^3\big)\Big)\\
&=\sum_{x\in \F^*_p} \sigma\big(x(ax-c)^2-4x^4\big)\\
&=\sum_{x\in \F^*_p} \sigma\big(x(cx-a)^2-4\big)
\end{align*}
after an inversive change of variable $x:=1/x$ in the last step.

Putting everything together, we get
\begin{align}\label{eq: bw}
\Var T=p-1+\Sigma=p-1-n^*_3+\sum_{x\in \F^*_p} \sigma\big(x(cx-a)^2-4\big).
\end{align}

(i) Assume that $c=0$. Then
\[n^*_3=\#\{x\in \F_p^*: x^3+ax^2=0\}=\begin{cases}
0 & \textrm{ if } a=0,\\
1 & \textrm{ if } a\neq 0.
\end{cases}\]
Furthermore, by \eqref{eq: qjs1} 
\begin{align*}
\sum_{x\in \F^*_p} \sigma(a^2x-4)&=
\begin{cases}
\sigma(-1)(p-1) & \textrm{ if } a=0,\\
-\sigma(-1) & \textrm{ if } a\neq 0.
\end{cases}
\end{align*}
In this case, from \eqref{eq: bw} we conclude that
\begin{align}
\Var T&=\begin{cases}
\big(1+\sigma(-1)\big)(p-1) & \textrm{ if } a=0,\\
p-2-\sigma(-1) & \textrm{ if } a\neq 0.
\end{cases}
\end{align}

(ii) Assume that $c\neq 0$. Then $n_3^*=\#\{x\in \F_p^*: x^3+ax^2-4c=0\}$ agrees with $n_3=\#\{x\in \F_p: x^3+ax^2-4c=0\}$ due to solutions being necessarily non-zero. Furthermore, the change of variables $x:=(x+a)/c$ gives
\begin{align*}
\sum_{x\in \F^*_p} \sigma\big(x(cx-a)^2-4\big)&= \sum_{x\in \F^*_p} \sigma((x+a)x^2/c-4)\\
&=-\sigma(-1)+\sigma(c)\sum_{x\in \F_p} \sigma(x^3+ax^2-4c).
\end{align*}
All in all, \eqref{eq: bw} becomes
\begin{align}
\Var T=p-1-\sigma(-1)-n_3+\sigma(c)\sum_{x\in \F_p} \sigma(x^3+ax^2-4c).
\end{align}
This completes the proof of Theorem~\ref{thm: B}.
\end{proof}

\begin{ex}
Fix $c\in \F_p^*$. For the family of curves $\mathcal{C}_\lambda : y^2=x^3+\lambda x+c$, the variance of the point-count is given by
\begin{align}\label{eq: a0b}
\Var \#\mathcal{C}=p-1-\sigma(-1)-n_3+\sigma(c) \sum_{x\in \F_p} \sigma(x^3-4c)
\end{align}
where $n_3$ is the number of solutions in $\F_p$ to $x^3=4c$. In the case $c=1$, formula \eqref{eq: a0b} resolves a difficulty raised by Miller \cite[Rem.2.2]{Mil2}.

The right-most sum in \eqref{eq: a0b} is $\psi_3(-4c)$. We can use Lemma~\ref{lem: J2} to give a more explicit evaluation. When $p\equiv 2$ mod $3$, we have $\psi_3(-4c)=0$ and $n_3=1$ for each $c\in \F_p^*$, so
\begin{align}
\Var \#\mathcal{C}=p-2-\sigma(-1).
\end{align}
This evaluation essentially recovers first- and second-moment computations of He and McLaughlin \cite[Thms.3,4]{HM}. But the general formula \eqref{eq: a0b} seems unachievable by their method--namely, exponential sums.  In particular, they are unable to handle the much more interesting case of \eqref{eq: a0b}, when $p\equiv 1$ mod $3$. 

When $p\equiv 1$ mod $3$, there are two cases according to whether $4c$ is, or is not, a cube in $\F_p^*$. We have, respectively, that $n_3=3$ or $n_3=0$, and
\begin{align}
\Var \#\mathcal{C}=\begin{cases}
p-4-\sigma(-1)+\sigma(-1)\cdot 2A_3& \textrm{ if } 4c\in (\F_p^*)^3, \\
p-1-\sigma(-1)+\sigma(-1)(-A_3\pm 3B_3) & \textrm{ if } 4c\notin (\F_p^*)^3.
\end{cases}
\end{align}
\end{ex}

\begin{ex}
For the family of curves $\mathcal{C}_\lambda : y^2=x^3+6x^2+\lambda x+4$, the variance of the point-count is given by
\begin{align*}
\Var \#\mathcal{C}=p-1-\sigma(-1)-n_3+ \sum_{x\in \F_p} \sigma(x^3+6x^2-16)
\end{align*}
where $n_3$ is the number of solutions in $\F_p$ to $x^3+6x^2-16=0$. The cubic $x^3+6x^2-16$ factors as $(x+2)(x^2+4x-8)$. Firstly, we see that $n_3=2+\sigma(3)$. Secondly, we can handle the above quadratic character sum by a change of variable $x:=x-2$. This yields 
\begin{align*}
\sum_{x\in \F_p} \sigma(x^3+6x^2-16)=\sum_{x\in \F_p} \sigma\big(x(x^2-12)\big)=\varphi_2(-12).
\end{align*}
Summarizing, we have
\begin{align}
\Var \#\mathcal{C}=p-3-\sigma(-1)-\sigma(3)+\varphi_2(-12).
\end{align}
We can be more explicit, by appealing to Lemma~\ref{lem: J1} and recalling that
\begin{align*}
\sigma(-1)=
\begin{cases} 
1 & \textrm{ if } p\equiv 1 \textrm{ mod } 4,\\
-1 & \textrm{ if } p\equiv 3 \textrm{ mod } 4;
\end{cases}, \quad \sigma(-3)=
\begin{cases} 
1 & \textrm{ if } p\equiv 1 \textrm{ mod } 3,\\
-1 & \textrm{ if } p\equiv 2 \textrm{ mod } 3.
\end{cases}
\end{align*}
The outcomes are as follows:
\begin{align}
\Var \#\mathcal{C}=
\begin{cases}
p-5\pm 2A_2 & \textrm{ if } p\equiv 1 \textrm{ mod } 12,\\
p-3\pm 2B_2 & \textrm{ if } p\equiv 5 \textrm{ mod } 12,\\
p-1 & \textrm{ if } p\equiv 7 \textrm{ mod } 12,\\
p-3 & \textrm{ if } p\equiv 11 \textrm{ mod } 12.
\end{cases}
\end{align}
\end{ex}

\medskip
\section{The family $\mathcal{C}_\lambda: y^2=x^3+\lambda x^2+bx+c$}\label{sec: N}
\begin{proof}[Proof of Theorem~\ref{thm: A}] We compute the variance of $S=(S_\lambda)_{\lambda\in \F_p}$, where
\begin{align*}
S_\lambda= \sum_{x\in \F_p} \sigma(x^3+\lambda x^2+bx+c).
\end{align*}
 For $x\in \F_p^*$, put 
\begin{align}
r(x)=\frac{x^3+bx+c}{x^2}
\end{align} 
so that
\[S_\lambda= \sigma(c)+\sum_{x\in \F^*_p} \sigma(\lambda+r(x)).\]
Thanks to translation invariance, the variance of $S=(S_\lambda)_{\lambda\in \F_p}$ equals that of $T=(T_\lambda)_{\lambda\in \F_p}$, where 
\begin{align}
T_\lambda=\sum_{x\in \F^*_p} \sigma(\lambda+r(x)).
\end{align}
By using \eqref{eq: qjs1} it is easy to see that $\Exp (T)=0$. Thus $\Var T=\Exp (T^2)$. Now, using \eqref{eq: qjs3}, we see that
\begin{align*}
\sum_{\lambda\in \F_p} T_\lambda^2&=\sum_{x,y\in \F^*_p} \sum_{\lambda\in \F_p} \sigma(\lambda+r(x)) \sigma(\lambda+r(y))\\
&=\sum_{x,y\in \F^*_p} \Big(p\llbracket r(x)=r(y)\rrbracket-1\Big)\\
&=p-1+ p\sum_{x\neq y\in \F^*_p}\llbracket r(x)=r(y)\rrbracket
\end{align*}
since $p\sum_{x=y\in \F^*_p}\llbracket r(x)=r(y)\rrbracket=p(p-1)$ and $ \sum_{x,y\in \F^*_p} 1=(p-1)^2$. Therefore
\begin{align}\label{eq: wa}
\Var T=\frac{1}{p}\sum_{\lambda\in \F_p} T_\lambda^2=1-\frac{1}{p}+ \sum_{x\neq y\in \F^*_p}\llbracket r(x)=r(y)\rrbracket.
\end{align}

Let $\Sigma$ denote the latter double sum. For $x\neq y\in \F_p^*$, we have $\llbracket r(x)=r(y)\rrbracket=\llbracket q(x,y)=0\rrbracket$ where \[q(x,y)= x^2y^2\cdot\frac{r(x)-r(y)}{x-y}=x^2y^2-bxy-c(x+y).\]
Thus
\begin{align*}
\Sigma=\sum_{x\neq y\in \F^*_p}\llbracket  q(x,y)=0\rrbracket=\sum_{x,y\in \F^*_p}\llbracket  q(x,y)=0\rrbracket-\sum_{x\in \F^*_p}\llbracket  q(x,x)=0\rrbracket.
\end{align*}

(i) Assume that $c=0$; then $b\neq 0$. Over $\F_p^*$, the equation $q(x,y)=0$ amounts to $xy=b$; there are $p-1$ solutions $(x,y)\in \F_p^*\times \F_p^*$, and $1+\sigma(b)$ of them have the form $(x,x)$. Thus 
\begin{align*}
\Sigma=p-1-\big(1+\sigma(b)\big).
\end{align*}
In view of \eqref{eq: wa}, we have 
\begin{align}
\Var T=p-1-\sigma(b)-p^{-1}.
\end{align}

(ii) Assume that $c\neq 0$. The equation $q(x,x)=0$ amounts to $x^4-bx^2-2cx=0$, that is $x^3-bx-2c=0$ over $\F_p^*$. Since solutions to the latter equation are automatically non-zero, we have
\begin{align*}
\sum_{x\in \F^*_p}\llbracket  q(x,x)=0\rrbracket=\#\{x\in \F_p: x^3-bx-2c=0\}=n_3.
\end{align*}

For each $x\in \F^*_p$ the sum $\sum_{y\in \F^*_p}\llbracket q(x,y)=0\rrbracket$ counts the number of non-zero solutions $y\in \F_p$ to the quadratic equation $x^2y^2-(bx+c)y-cx=0$. That number is $1+\sigma\big(4cx^3+(bx+c)^2\big)$ since, once again, any solution $y$ is automatically non-zero. Thus
\begin{align*}
\sum_{x, y\in \F^*_p}\llbracket q(x,y)=0\rrbracket&=\sum_{x\in \F^*_p} \Big(1+\sigma\big(4cx^3+(bx+c)^2\big)\Big)\\
&=p-2+\sum_{x\in \F_p} \sigma\big(4cx^3+(bx+c)^2\big),
\end{align*}
by including the index value $x=0$ in the latter sum.

Combining our calculations, we see that
\begin{align*}
\Sigma=p-2-n_3+\sum_{x\in \F_p} \sigma\big(4cx^3+(bx+c)^2\big).
\end{align*}
Now \eqref{eq: wa} yields
\begin{align}
\Var T=p-1-n_3-p^{-1}+\sum_{x\in \F_p} \sigma\big(4cx^3+(bx+c)^2\big).
\end{align}
This completes the proof of Theorem~\ref{thm: A}.
\end{proof}

We now consider two examples in line with our main theme--that of using Jacobsthal sums to give explicit evaluations. 

\begin{ex}\label{ex: b0} Fix $c\in \F_p^*$. For the family of curves $\mathcal{C}_\lambda : y^2=x^3+\lambda x^2+c$, the variance of the point-count is given by
\begin{align*}
\Var \#\mathcal{C}=p-1-n_3-p^{-1}+\sum_{x\in \F_p} \sigma(4cx^3+c^2)
\end{align*}
where $n_3$ is the number of solutions in $\F_p$ to $x^3=2c$. The change of variable $x:=x/2$ in the latter sum leads to the following more convenient form:
\begin{align}\label{eq: ab0}
\Var \#\mathcal{C}=p-1-n_3-p^{-1}+\sigma(2c) \sum_{x\in \F_p} \sigma(x^3+2c).
\end{align}
Formula \eqref{eq: ab0} extends first- and second-moment computations of Miller \cite[Sec.13.2]{Mil1}, \cite{Mil2}, who considered the case $c=1$. 

The right-most sum in \eqref{eq: ab0} is $\psi_3(2c)$. With Lemma~\ref{lem: J2} at hand, we can turn \eqref{eq: ab0} into a more explicit formula as follows. When $p\equiv 2$ mod $3$ we have $\psi_3(2c)=0$, and $n_3=1$ for each $c\in \F_p^*$ since the cubing map $x\mapsto x^3$ is a permutation of $\F_p^*$. Thus
\begin{align}
\Var \#\mathcal{C}=p-2-p^{-1}.
\end{align}

When $p\equiv 1$ mod $3$, there are two cases according to whether $2c$ is, or is not, a cube in $\F^*_p$. We have, respectively, that $n_3=3$ or $n_3=0$, and
\begin{align}
\Var \#\mathcal{C}=\begin{cases}
p-4-p^{-1}+2A_3 & \textrm{ if } 2c\in (\F_p^*)^3, \\
p-1-p^{-1}-A_3\pm 3B_3 & \textrm{ if } 2c\notin (\F_p^*)^3.
\end{cases}
\end{align}
\end{ex}

\begin{ex} For the family of curves $\mathcal{C}_\lambda : y^2=x^3+\lambda x^2+6x+2$, the variance of the point-count  is given by
\begin{align*}
\Var \#\mathcal{C}=p-1-n_3-p^{-1}+\sum_{x\in \F_p} \sigma\big(8x^3+(6x+2)^2\big)
\end{align*}
where $n_3$ is the number of solutions in $\F_p$ to the equation $x^3-6x-4=0$. The cubic equation factors as $(x+2)(x^2-2x-2)=0$, so the number of solutions is $n_3=2+\sigma(3)$. On the other hand, a change of variable $x:=(x-1)/2$ yields
\begin{align*}
\sum_{x\in \F_p} \sigma\big(8x^3+(6x+2)^2\big)=\sum_{x\in \F_p} \sigma(x^3+6x^2-3x).
\end{align*}
The latter sum evaluates as $\sigma(-3)\psi_3(1)$, see \cite[Ch.5]{N}. Thus
\begin{align}
\Var \#\mathcal{C}&=p-3-\sigma(3)-p^{-1}+\sigma(-3)\psi_3(1)
\end{align}
and, thanks to Lemma~\ref{lem: J2}, we conclude that
\begin{align}
\Var \#\mathcal{C}=\begin{cases}
p-3-\sigma(3)-p^{-1}+2A_3 & \textrm{ if } p\equiv 1 \textrm{ mod } 3, \\
p-3-\sigma(3)-p^{-1} & \textrm{ if } p\equiv 2 \textrm{ mod } 3.
\end{cases}
\end{align}
\end{ex}

\medskip
\section{The family $\mathcal{C}_\lambda : y^2=x^3+bx+c+\lambda (x^2-x)$}\label{sec: S}
We start by pointing out a duality underlying the dependence on $b$ and $c$ for the family $y^2=x^3+bx+c+\lambda (x^2-x)$, which we temporarily denote by $\mathcal{C}(b,c)$. The change of variable $x:=1-x$ transforms $y^2=x^3+bx+c+\lambda (x^2-x)$ into $-y^2=x^3+bx-(b+c+1)-(\lambda+3) (x^2-x)$. We thus obtain a twist of the family $\mathcal{C}(b,-(b+c+1))$. The upshot is that the variance of the number of points along the two families, $\mathcal{C}(b,c)$ and $\mathcal{C}(b,-(b+c+1))$, is the same. We note that the correspondence $(b,c)\leftrightarrow (b,-(b+c+1))$ is an involution, whose fixed points are all pairs $(b,c)$ satisfying $b+1=-2c$.

The above duality offers a preliminary insight into the pairs $(b,c)$ that are tabulated in Theorem~\ref{thm: D}. The double pairs in the top half of the table are dual pairs. The pairs in the bottom half are fixed under duality.

\begin{proof}[Proof of Theorem~\ref{thm: D}] Fix $b,c\in \F_p$. For $\lambda\in \F_p$, set
\begin{align}\label{eq: yf1}
S_\lambda= \sum_{x\in \F_p} \sigma\big(x^3+bx+c+\lambda (x^2-x)\big).
\end{align}
As before, our aim is to compute the variance of $(S_\lambda)_{\lambda\in \F_p}$. We first write
\begin{align*}
S_\lambda= \sigma(c)+\sigma(1+b+c)+\sum_{x\neq 0,1} \sigma\big(x^3+bx+c+\lambda (x^2-x)\big).
\end{align*}
On $\F_p\setminus\{0,1\}$, the map $x\mapsto x/(x-1)$ is a permutation so we may use it as a change of variable in the latter sum. The polynomial argument becomes
\begin{align*}
\frac{x^3}{(x-1)^3}+\frac{bx}{x-1}+c+\lambda \bigg(\frac{x^2}{(x-1)^2}-\frac{x}{x-1}\bigg)=\frac{x}{(x-1)^2}\big(\lambda+r(x)\big)
\end{align*} 
where 
\begin{align}
r(x)=\frac{x^3+bx(x-1)^2+c(x-1)^3}{x(x-1)}.
\end{align}
Thus
\begin{align*}
S_\lambda=\sigma(c)+\sigma(1+b+c)+\sum_{x\neq 0,1} \sigma(x)\sigma\big(\lambda+r(x)\big).
\end{align*}
By translation invariance, $\Var T=\Var S$ where $T=(T_\lambda)_{\lambda\in \F_p}$ is given by
\begin{align}
T_\lambda=\sum_{x\neq 0,1} \sigma(x)\sigma\big(\lambda+r(x)\big).
\end{align}
It is easy to see, using \eqref{eq: qjs1}, that $\Exp (T)=0$. Thus $\Var T=\Exp (T^2)$.

Now, using \eqref{eq: qjs3}, we get
\begin{align*}
\sum_{\lambda\in \F_p} T_\lambda^2&= \sum_{x,y\neq 0,1}\sum_{\lambda\in \F_p}  \sigma(xy)\sigma\big(\lambda+r(x)\big)\sigma\big(\lambda+r(y)\big)\\
&=\sum_{x,y\neq 0,1} \sigma(xy)\Big(p\llbracket r(x)=r(y)\rrbracket-1\Big).
\end{align*}
Note that $\sum_{x,y\neq 0,1} \sigma(xy)=\big(\sum_{x\neq 0,1} \sigma(x)\big)^2=1$. We obtain
\begin{align*}
\Var T=\frac{1}{p}\sum_{\lambda\in \F_p} T_\lambda^2= -\frac{1}{p}+\sum_{x,y\neq 0,1} \sigma(xy)\llbracket r(x)=r(y)\rrbracket.
\end{align*}
The diagonal terms, namely those accounting for $x=y\in \F_p\setminus\{0,1\}$, contribute $p-2$ to the double sum. When $x, y\in \F_p\setminus\{0,1\}$ are distinct, we have $\llbracket r(x)=r(y)\rrbracket=\llbracket q(x,y)=0\rrbracket$ where
\begin{align*}
q(x,y)=x(x-1)y(y-1)\cdot \frac{r(x)-r(y)}{x-y}
\end{align*}
that is,
\begin{align*}
q(x,y)=xy(xy-x-y)+bxy(x-1)(y-1)+c(x-1)(y-1)(xy-1).
\end{align*}
Thus far, we have
\begin{align}
\Var T= p-2-p^{-1}+\Sigma
\end{align}
where
\begin{align}
\Sigma= \sum_{x\neq y \in \F_p\setminus\{0,1\}} \sigma(xy)\llbracket q(x,y)=0\rrbracket.
\end{align}

We now turn to the computation of $\Sigma$, which is what $\Sigma(b,c)$ stands for in the statement of Theorem~\ref{thm: D}. As usual, we reinstate the diagonal contribution by writing
\begin{align*}
\Sigma=\sum_{x, y \neq 0,1} \sigma(xy)\llbracket q(x,y)=0\rrbracket-\sum_{x \neq 0,1} \llbracket q(x,x)=0\rrbracket.
\end{align*}
The first double sum is not affected by adding the index values $x=1$ and $y=1$. For $q(x,1)=-x\neq 0$ whenever $x\in \F_p^*$; likewise, $q(1,y)\neq 0$ whenever $y\in \F_p^*$. The second sum counts the number 
\begin{align}
N=\#\{x\in \F_p\setminus\{0,1\}: q(x,x)=0\}.
\end{align}
To summarize, we have
\begin{align}
\Sigma=-N+\sum_{x, y \in \F_p^*} \sigma(xy)\llbracket q(x,y)=0\rrbracket.
\end{align}
In the latter double sum, we make the change of variables $x:=x/y$ for each $y\in \F_p^*$; we arrive at
\begin{align}
\Sigma&=-N+\sum_{x \in \F_p^*} \sigma(x)\sum_{y \in \F_p^*} \llbracket q(x/y,y)=0\rrbracket.
\end{align}
For each $x\in \F_p^*$, the inner sum counts 
\begin{align}
N(x)=\#\{y \in \F_p^*: q(x/y,y)=0\}.
\end{align}

We break down the rest of the argument into two main cases, $c=0$ respectively $b+1=-2c\neq 0$. The first case yields the results collected in the top half of the table of Theorem~\ref{thm: D}, but it also covers the self-dual pair $(b,c)=(-1,0)$. The second case yields the last two pairs from the table.

\medskip
\noindent \textbf{Case $c=0$.} Then
\begin{align}
q(x,y)=xy(xy-x-y)+bxy(x-1)(y-1)
\end{align}

If $b=-1$ we have, very simply, $q(x,y)=-xy$. We see that $N=0$ and $N(x)=0$ for each $x\in \F_p^*$. Thus, for $(b,c)=(-1,0)$, we have
\begin{align*}
\Sigma=0.
\end{align*}

If $b=0$ we have, also quite simply, $q(x,y)=xy(xy-x-y)$. For each $x\in \F_p^*$, the equation $q(x/y,y)=0$ amounts to $y+x/y=x$; this quadratic equation has $N(x)=1+\sigma(x^2-4x)$ solutions $y\in \F_p^*$. The equation $q(x,x)=0$ amounts to $x^2-2x=0$; this has just one solution $x\neq 0, 1$, so $N=1$. Thus, for $(b,c)=(0,0)$, we have
\begin{align*} 
\Sigma&= -N+\sum_{x \in \F_p^*} \sigma(x)N(x)\\
&=-1+\sum_{x \in \F_p^*} \sigma(x)\big(1+\sigma(x^2-4x)\big)\\
&=-1+\sum_{x \in \F_p^*} \sigma(x-4)=-1-\sigma(-1).
\end{align*}

Now assume that $b\neq 0, -1$. We write $q(x,y)=xy\big((b+1)(x-1)(y-1)-1\big)$; for $x,y\in \F_p^*$, the equation $q(x,y)=0$ amounts to $(b+1)(x-1)(y-1)=1$.

On the one hand, the equation $q(x,x)=0$ amounts to $(b+1)(x-1)^2=1$. This quadratic has $1+\sigma(b+1)$ solutions in $\F_p$, and we note that $0$ or $1$ cannot be solutions. We thus have $N=1+\sigma(b+1)$. 

On the other hand, for each $x\in \F_p^*$ the equation $q(x/y,y)=0$ amounts to 
\[y+\frac{x}{y}=x+\frac{b}{b+1}.\] 
We momentarily put $d:=b/(b+1)$. Then $N(x)=1+\sigma((x+d)^2-4x)$. We compute
\begin{align*}
\Sigma&= -N+\sum_{x \in \F_p^*} \sigma(x)N(x)\\
&= -1-\sigma(b+1)+\sum_{x \in \F_p^*} \sigma(x)\big(1+\sigma((x+d)^2-4x)\big)\\
&=-1-\sigma(b+1)+\sum_{x \in \F_p} \sigma(x)\sigma\big((x+d)^2-4x\big)\\
&=-1-\sigma(b+1)+\sigma(d) \sum_{x \in \F_p} \sigma(x)\sigma\Big((x+1)^2-\frac{4}{d}\cdot x\Big)
\end{align*}
where, in the last step, we made the change of variables $x:=dx$. Recall that $d\neq 0$. Reverting from $d$ to $b$, and rearranging the quadratic argument, we deduce the formula
\begin{align}
\Sigma=-1-\sigma(b+1)+\sigma(b(b+1)) \sum_{x \in \F_p} \sigma\Big(x^3-\frac{2b+4}{b}\cdot x^2+x\Big).
\end{align}

At this point, we specialize $b$ so as to be able to evaluate the latter sum in terms of the Jacobsthal sum $\varphi_2(1)$.

Let $b=-2$, so $(b,c)=(-2,0)$. We then get
\begin{align*}
\Sigma&=-1-\sigma(-1)+\sigma(2) \sum_{x \in \F_p} \sigma(x^3+x)\\
&=-1-\sigma(-1)+\sigma(2) \varphi_2(1).
\end{align*}

Let $b=1$, so $(b,c)=(1,0)$. We then get
\begin{align*}
\Sigma=-1-\sigma(2)+\sigma(2) \sum_{x \in \F_p} \sigma(x^3-6x^2+x).
\end{align*}
The latter sum evaluates as $\sigma(2)\varphi_2(1)$, see \cite[Ch.5]{N}. We conclude that
\begin{align*}
\Sigma=-1-\sigma(2)+\varphi_2(1).
\end{align*} 

Let $b=-1/2$, so $(b,c)=(-1/2,0)$. We then get
\begin{align*}
\Sigma&=-1-\sigma(2)+\sigma(-1) \sum_{x \in \F_p} \sigma(x^3+6x^2+x)\\
&=-1-\sigma(2)+\sum_{x \in \F_p} \sigma(x^3-6x^2+x)\\
&=-1-\sigma(2)+\sigma(2)\varphi_2(1).
\end{align*} 
In the second step we changed the variable $x:=-x$, and then we used the same evaluation as before.

\medskip
\noindent \textbf{Case $b+1=-2c\neq 0$.} Using $b=-(2c+1)$, we compute
\begin{align}
q(x,y)=-xy-c(x-1)(y-1)(xy+1).
\end{align}

Next, we specialize to two values of $c$. In the first case, $c=1$, the algebra runs very smoothly. In the second case, $c=-1$, the algebra is quite unruly.

Let $c=1$, so $(b,c)=(-3,1)$. In this case we can write $q(x,y)=-(xy-x+1)(xy-y+1)$. In particular, the equation $q(x,x)=0$ amounts to $(x^2-x+1)^2=0$; this has $1+\sigma(-3)$ solutions $x\in \F_p$, which automatically satisfy $x\neq 0,1$. Thus $N=1+\sigma(-3)$. 

For $x,y\in \F_p^*$ the equation $q(x/y,y)=0$ can be brought to $\big((x+1)y-x\big)\big(y-(x+1)\big)=0$. There are no solutions $y\in \F_p^*$ when $x=-1$. When $x\neq -1$, the solutions are $y=x/(x+1)$ and $y=x+1$ ; both are non-zero, and they agree precisely when $x^2+x+1=0$. This means that, for $x\neq -1$, we have $N(x)=2-\llbracket x^2+x+1=0\rrbracket$.

We obtain
\begin{align*}
\Sigma&= -N+\sum_{x \in \F_p^*} \sigma(x)N(x)\\
&=-1-\sigma(-3)+\sum_{x\neq 0,-1} \sigma(x)\big(2-\llbracket x^2+x+1=0\rrbracket\big)\\
&=-1-\sigma(-3)-2\sigma(-1)-\sum_{x\in \F_p} \sigma(x)\llbracket x^2+x+1=0\rrbracket.
\end{align*}

The equation $x^2+x+1=0$ has $1+\sigma(-3)$ solutions. By rewriting it as $(x+1)^2=x$, we see that each solution is a square in $\F_p^*$. We conclude that
\begin{align*}
\Sigma=-1-\sigma(-3)-2\sigma(-1)-\big(1+\sigma(-3)\big)=-2\big(1+\sigma(-3)+\sigma(-1)\big).
\end{align*}

Let $c=-1$, so $(b,c)=(1,-1)$. For $x,y\in \F_p^*$ the equation $q(x/y,y)=0$ turns into $(x+1)y^2-(x^2+x+1)y+x(x+1)=0$. There are no solutions $y\in \F_p^*$ when $x=-1$. For $x\neq -1$, the quadratic equation has $N(x)=1+\sigma(\Delta(x))$ solutions $y\in \F_p^*$, where 
\[\Delta(x)=(x^2+x+1)^2-4x(x+1)^2=x^4-2x^3-5x^2-2x+1.\]
We then have
\begin{align*}
\Sigma&= -N+\sum_{x \in \F_p^*} \sigma(x)N(x)\\
&=-N+\sum_{x \neq 0,-1} \sigma(x)\big(1+\sigma(\Delta(x))\big)\\
&=-N- \sigma(-1)\big(1+\sigma(\Delta(-1))\big)+\sum_{x \in \F_p} \sigma(x)\big(1+\sigma(\Delta(x))\big)\\
&=-N-2\sigma(-1)+\sum_{x \in \F_p} \sigma(x^5-2x^4-5x^3-2x^2+x).
\end{align*}

We now use the following result \cite[Thm.5.20]{N}: the quadratic character sum with palindromic quintic argument
\[\sum_{x \in \F_p} \sigma(x^5+\alpha x^4+\beta x^3+\alpha x^2+x)\]
equals a combination of quadratic character sums with cubic arguments, namely
\[\sigma(\alpha+4) \varrho\bigg(\frac{\beta+2\alpha+2}{(\alpha+4)^2}\bigg)+\sigma(\alpha-4)\varrho\bigg(\frac{\beta-2\alpha+2}{(\alpha-4)^2}\bigg)\]
as long as $\alpha \neq \pm 4$. Here, we use the \emph{$\varrho$-sums} defined by
\[\varrho(c)=\sum_{x\in \F_p} \sigma(x^3+x^2+cx).\]
In our case $\alpha=-2$ and $\beta=-5$ so
\[\sum_{x \in \F_p} \sigma(x^5-2x^4-5x^3-2x^2+x)=\sigma(2)\varrho\Big(\frac{-7}{4}\Big)+\sigma(-6)\varrho\Big(\frac{1}{36}\Big).\]
The $\varrho$-sums satisfy the twisted symmetry \cite[Thm.5.3]{N}
\[\varrho \bigg(\frac{1}{4}-c\bigg)=\sigma(-2)\varrho(c).\]
Then $\varrho(-7/4)=\sigma(-2)\varrho(2)$. Also, $\varrho(1/36)=\sigma(-3)\varphi_2(1)$ \cite[Cor.5.4]{N}. Thus
\[\sum_{x \in \F_p} \sigma(x^5-2x^4-5x^3-2x^2+x)=\sigma(-1)\varrho(2)+\sigma(2)\varphi_2(1).\]
Summarizing, we have
\begin{align*}
\Sigma=-N-2\sigma(-1)+\sigma(-1)\varrho(2)+\sigma(2)\varphi_2(1)
\end{align*}
where $N=n_4$ is the number of solutions $x\in \F_p$ to the equation $q(x,x)=0$, that is, $x^4-2x^3+x^2-2x+1=0$. 
\end{proof}



\begin{thebibliography}{88}
\bibitem{B} B.J. Birch: \emph{How the number of points of an elliptic curve over a fixed prime field varies}, J. London Math. Soc. 43 (1968), 57--60

\bibitem{HM} S. He, J. McLaughlin: \emph{Some remarks on the number of points on elliptic curves over finite prime field}, Bull. Austral. Math. Soc. 75 (2007), no. 1, 135--149

\bibitem{H} H. Hopf: \emph{\"Uber die Verteilung quadratischer Reste}, Math. Z. 32 (1930), no. 1, 222--231

\bibitem{J0} E. Jacobsthal: \emph{Anwendungen einer Formel aus der Theorie der quadratischen Reste}, Dissertation, Berlin, 1906

\bibitem{J} E. Jacobsthal: \emph{\"Uber die Darstellung der Primzahlen der Form $4n+1$ als Summe zweier Quadrate}, J. Reine Angew. Math. 132 (1907), 238--246


\bibitem{Mil1} S.J. Miller: \emph{$1$- and $2$-level densities for families of elliptic curves: evidence for the underlying group symmetries}, PhD thesis Princeton University (2002)

\bibitem{Mil2} S.J. Miller: \emph{Variation in the number of points on elliptic curves and applications to excess rank}, C. R. Math. Acad. Sci. Soc. R. Can. 27 (2005), no. 4, 111--120

\bibitem{N} B. Nica: \emph{Jacobsthal Sums}, Monographs in Number Theory, World Scientific (forthcoming)

\bibitem{Y} M. Yamauchi: \emph{Some identities on the character sum containing $x(x-1)(x-\lambda)$}, Nagoya Math. J. 42 (1971), 109--113
\end{thebibliography}
\end{document}